\newcommand{\R}{\mathbb{R}}
\newtheorem{theorem}{Theorem}[section]
\newtheorem{corollary}[theorem]{Corollary}
\newtheorem{remark}[theorem]{Remark}
\newtheorem{lemma}[theorem]{Lemma}
\newtheorem{proposition}[theorem]{Proposition}
\numberwithin{equation}{section}
\author{Mónica Clapp\footnote{M. Clapp was supported by UNAM-DGAPA-PAPIIT grant IN100718 (Mexico).}, \quad Liliane A. Maia\footnote{L. Maia was supported by FAPDF 0193.001300/2016, CNPq/PQ 308378/2017-2 (Brazil), and PROEX/CAPES (Brazil).} \quad and \quad Benedetta Pellacci\footnote{B. Pellacci was supported by ``Gruppo Nazionale per l'Analisi Matematica, la Probabilità e le loro Applicazioni'' (GNAMPA) of the Istituto Nazionale di Alta Matematica (INdAM); PRIN-2015KB9WPT Grant: ``Variational methods, with applications to problems in mathematical physics and geometry'',}}
\title{Positive multipeak solutions to a zero mass problem in exterior domains}
\date{\today}
\begin{document}

\maketitle

\begin{abstract}
We establish the existence of positive multipeak solutions to the nonlinear scalar field equation with zero mass
$$-\Delta u = f(u), \qquad u\in D_0^{1,2}(\Omega_R),$$
where $\Omega_R:=\{x \in \mathbb{R}^N:|u|>R\}$ with $R>0$, $N\geq4$, and the nonlinearity $f$ is subcritical at infinity and supercritical near the origin. We show that the number of positive multipeak solutions becomes arbitrarily large as $R \to \infty$. \medskip

\noindent \textbf{Keywords and phrases:} Scalar field equation, zero mass, exterior domain, multipeak solutions.

\noindent \textbf{2010 Mathematical Subject Classification:} 35Q55, 35B09, 35J20.
\end{abstract}

\section{Introduction}
\label{sec:introduction}

This paper is concerned with the existence of multiple positive solutions to the nonlinear scalar field equation
\begin{equation}
\label{prob}
-\Delta u = f(u), \qquad u\in D_0^{1,2}(\Omega_R),
\end{equation}
in the exterior domains $\Omega_R:=\{x \in \mathbb{R}^N:|u|>R\}$ with $R>0$ and $N\geq4$. The nonlinearity $f$ is assumed to be subcritical at infinity and supercritical near the origin. Our precise assumptions on $f$ are given below. They include the model nonlinearity
\begin{equation} \label{eq:model}
f(s)=\frac{s^{q-1}}{1+s^{q-p}},
\end{equation}
with $2<p<2^*:=\frac{2N}{N-2}<q$.

In their seminal paper \cite{bl}, Berestycki and Lions considered the zero mass problem
\begin{equation}
\label{limprob}
-\Delta u = f(u), \qquad u\in D^{1,2}(\mathbb{R}^N),
\end{equation}
in the whole space $\mathbb{R}^N$. They showed that, if $f$ is subcritical at infinity and supercritical near the origin, it has a ground state solution.

The problem 
\begin{equation}
\label{eq:exterior}
-\Delta u = f(u), \qquad u\in D_0^{1,2}(\Omega),
\end{equation}
in an arbitrary exterior domain $\Omega$ (i.e., in a smooth domain whose complement is bounded and nonempty) was studied by Benci and Micheletti for domains whose complement has small enough diameter \cite{bm}, and by Khatib and Maia in the general case  \cite{km}. They showed that \eqref{eq:exterior} does not have a least energy solution, but that it does have a positive higher energy solution whenever the limit problem \eqref{limprob} has a unique positive solution, up to translations. This is true, e.g., for the model nonlinearity \eqref{eq:model}. 

When $\Omega$ is the complement of a ball, the problem \eqref{prob} is known to have a positive radial solution; see \cite{el}. Thus, it is natural to ask whether the solution found in \cite{km} coincides with the radial one or not. We shall see that it does not, if $R$ is sufficiently large. Moreover, we will show that the number of positive nonradial solutions to the problem \eqref{prob} becomes arbitrarily large, as $R\to\infty$.

We assume that $f$ has the following properties:

\begin{enumerate}
\item[$(f1)$] $f\in\mathcal{C}^{1}[0,\infty),$ and there are
constants $A_{1}>0$ and $2<p<2^{\ast}<q$ such that, for $m=-1,0,1,$
\begin{equation}
|f^{(m)}(s)|\leq
\begin{cases}
A_{1}|s|^{p-(m+1)} & \text{if}\ |s|\geq 1,\\
A_{1}|s|^{q-(m+1)} & \text{if}\ |s|\leq 1,
\end{cases}
\end{equation}
where $f^{(-1)}:=F,$ $f^{(0)}:=f,$ $f^{(1)}:=f',$ and $F(s):=\int_{0}^{s}f(t)\mathrm{d}t.$

\item[$(f2)$] There is a constant $\theta>2$ such that
$0\leq\theta F(s)\leq f(s)s<f'(s)s^{2}$ for all $s>0.$
\end{enumerate}

We write
$$\|u\|^{2}:=\int_{\mathbb{R}^{N}}|\nabla u|^{2}$$
for the norm of $u$ in the space $D^{1,2}(\mathbb{R}^{N})$, and 
$D_{0}^{1,2}(\Omega_{R})$ for the closure of $\mathcal{C}_c^\infty(\Omega_R)$ in $D^{1,2}(\mathbb{R}^{N})$. Let
$$J(u):= \frac{1}{2}\|u\|^{2} - \int_{\mathbb{R}^N} F(u)$$
be the energy functional associated to problem \eqref{prob}.

We identify $\mathbb{R}^{N}\equiv\mathbb{C} \times \mathbb{R}^{N-2}$ and we write the
points in $\mathbb{R}^{N}$ as $x = (z,y)$ with $z\in\mathbb{C}$ and $y\in \mathbb{R}^{N-2}$. 

We will prove the following result.

\begin{theorem}
\label{thm:main}
If $f$ satisfies $(f1)-(f2)$ then, for any given $m\in\mathbb{N}$, $m\geq 2$, there exists $R(m)>0$ such that, for each $R>R(m)$, the problem $(\ref{prob})$ has $m-1$ positive nonradial solutions $u_{R,2},\ldots,u_{R,m}\in D_{0}^{1,2}(\Omega_{R})$ with the following properties: for every $n = 2,\ldots,m$,
\begin{enumerate}
\item[$(a)$]$u_{R,n}(\mathrm{e}^{2\pi\mathrm{i}j/n}z,y) = u_{R,n}(z,y)$ for all $(z,y)\in
\Omega_{R}$ and $j=0,\ldots,n-1$,
\item[$(b)$]$u_{R,n}(z,y_1) = u_{R,n}(z,y_2)$ if $|y_1| = |y_2|$,
\item[$(c)$]$(n-1)c_0<J(u_{R,n})<nc_0$, where $c_0$ is the ground state energy of the limit problem \eqref{limprob}.
\end{enumerate}
Moreover, there are sequences $R_{k}>0$, $\xi_k=(\zeta_{k},0)\in\Omega_{R_{k}}$ and a positive least energy radial solution $\omega$ to the limit problem $(\ref{limprob})$ such that
$$\mathrm{dist}(\xi_k,\partial \Omega_{R_k}) \to \infty \qquad\text{as }\,k\to\infty,$$
$$\lim_{k \to \infty} \left\|u_{R_k,n} - \sum\limits_{j=0}^{n-1} \omega\left(\,\cdot\,- (\mathrm{e}^{2\pi\mathrm{i}j/n}\zeta_k,0) \right)\right\| =0,$$
and 
$$J(u_{R_k,n}) \to nc_0 \qquad\text{as }\,k\to\infty.$$
\end{theorem}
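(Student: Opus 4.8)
The plan is to produce one solution $u_{R,n}$ for each $n\in\{2,\dots,m\}$ by minimizing $J$ on a symmetric Nehari manifold. Identifying $\mathbb{R}^N\equiv\mathbb{C}\times\mathbb{R}^{N-2}$, let $g_n$ act by $g_n(z,y)=(\mathrm{e}^{2\pi\mathrm{i}/n}z,y)$ and let $O(N-2)$ act on the variable $y$, and set $G_n:=\langle g_n\rangle\times O(N-2)$. I would work in the closed subspace $H_n\subset D_0^{1,2}(\Omega_R)$ of $G_n$-invariant functions, so that properties $(a)$ and $(b)$ are built in. On the Nehari manifold $\mathcal{N}_n:=\{u\in H_n\setminus\{0\}:J'(u)u=0\}$ assumption $(f2)$ provides the mountain-pass geometry, boundedness of minimizing sequences, and $J\geq 0$; I set $c_{n,R}:=\inf_{\mathcal{N}_n}J$. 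Since $G_n$ acts by isometries and $J$ is $G_n$-invariant, the principle of symmetric criticality guarantees that a constrained critical point is a genuine weak solution of \eqref{prob}, and positivity follows from the maximum principle after replacing a minimizer by its absolute value.

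Next I would establish the strict upper bound $c_{n,R}<nc_0$. Starting from the least-energy radial solution $\omega$ of the limit problem \eqref{limprob}, I place $n$ copies at the vertices $\xi_j=(\mathrm{e}^{2\pi\mathrm{i}j/n}\zeta,0)$ of a regular $n$-gon of radius $\rho=|\zeta|$, truncate away from the hole $\{|x|\le R\}$, and project the symmetrized sum onto $\mathcal{N}_n$. A standard interaction estimate shows that, because positive bumps attract in this superquadratic problem, the energy equals $nc_0$ minus a positive interaction term plus lower-order boundary and truncation errors; choosing $\rho$ large but finite (depending on $R$) yields $c_{n,R}<nc_0$.

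The core of the argument is compactness, where the symmetry is decisive. The only finite $G_n$-orbits in $\Omega_R$ are the $n$-point orbits of points $(z,0)$ with $z\neq0$: any point with $y\neq 0$ has an orbit containing the whole sphere $O(N-2)\cdot y$, and here the hypothesis $N\ge 4$ (so $\dim S^{N-3}\ge 1$) is used to show that concentration of a $G_n$-invariant function along such a sphere forces the Dirichlet energy to blow up, ruling out ``necklace'' concentration. Hence in $H_n$ the lowest level at which Palais--Smale compactness can fail is $nc_0$: a bubble escaping to infinity drags its entire $n$-point orbit with it, at cost at least $nc_0$. Applying a $G_n$-equivariant global-compactness (Struwe-type) decomposition to a minimizing sequence, the bound $J\geq 0$ on $\mathcal{N}_n$ together with $c_{n,R}<nc_0$ forbids the loss of any orbit of bubbles; the weak limit is therefore nontrivial and the convergence is strong, so $c_{n,R}$ is attained at a positive solution $u_{R,n}$ with $J(u_{R,n})=c_{n,R}<nc_0$.

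Finally I would analyze $R\to\infty$. If $u_k:=u_{R_k,n}$ are minimizers along $R_k\to\infty$, then since they are supported in $\{|x|>R_k\}$ they converge weakly to $0$, so the same symmetric decomposition forces all their energy into escaping orbits, whence $\liminf_k J(u_k)\geq nc_0$; combined with the upper bound this gives $c_{n,R}\to nc_0$ as $R\to\infty$. Because the total limiting energy is exactly $nc_0$, the decomposition consists of precisely one orbit of $n$ least-energy bubbles, which must separate from one another and from $\partial\Omega_{R_k}$; this yields $\mathrm{dist}(\xi_k,\partial\Omega_{R_k})\to\infty$, the stated $D^{1,2}$-convergence to $\sum_{j}\omega(\,\cdot\,-(\mathrm{e}^{2\pi\mathrm{i}j/n}\zeta_k,0))$ along a subsequence selecting a fixed ground state $\omega$, and $J(u_{R_k,n})\to nc_0$. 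Since $c_{n,R}\to nc_0>(n-1)c_0$, choosing $R(m)$ large makes $(n-1)c_0<J(u_{R,n})<nc_0$ hold for all $2\le n\le m$ and $R>R(m)$, giving $(c)$; the disjoint energy intervals render the solutions mutually distinct, and comparison of $c_{n,R}$ with the radial level together with the $n$-peak profile shows they are nonradial. I expect the main obstacle to be exactly this asymptotic step: making the $G_n$-equivariant concentration-compactness rigorous in the zero-mass space $D^{1,2}(\mathbb{R}^N)$, where no Sobolev embedding is compact and $f$ is controlled only by the two distinct powers $p$ and $q$, and in particular excluding spherical concentration and proving that the peaks genuinely separate to infinity.
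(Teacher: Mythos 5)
Your proposal is correct and follows essentially the same route as the paper: minimization of $J$ on the $\mathbb{Z}_n\times O(N-2)$-invariant Nehari manifold, a strict upper bound $c_R^{\Gamma_n}<nc_0$ via truncated sums of $n$ translated ground states with interaction estimates, an equivariant splitting lemma for the moving domains that uses $N\ge4$ to exclude concentration on spheres and forces escaping bubbles to come in $n$-point orbits of cost $nc_0$, and the limit analysis $c_R^{\Gamma_n}\to nc_0$ as $R\to\infty$ to separate the energy levels and obtain the multipeak profile. The only ingredient you flag as an obstacle but do not detail --- showing the peaks separate from $\partial\Omega_{R_k}$ --- is handled in the paper by a half-space nonexistence argument inside the splitting lemma, which is a technical point rather than a structural difference.
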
 \vspace{6pt}

The solution $u_{R,n}$ is obtained by minimizing the energy functional $J$ on the Nehari manifold of functions which have the symmetries described in $(a)$ and $(b)$. Due to the lack of compactness of the funcional $J$, the existence of these minimizers is not obvious. We prove a splitting lemma for the varying domains $\Omega_R$ (see Lemma \ref{lem:splitting}) which yields a condition for the existence of $u_{R,n}$. Fine estimates allow us to show that this condition is satisfied and, thus, to prove that a minimizer exists for every $R>0$ and every $2\leq n \leq \infty$; see Theorem \ref{thm:existence} below.

The splitting lemma for the varying domains $\Omega_R$ allows us also to show that
$$\lim_{R\to\infty}J(u_{R,n}) = nc_0 \quad\text{for each }2 \leq n \leq \infty,$$
and that the limit profile of $u_{R,n}$, as $R\to\infty$, is the one described in Theorem \ref{thm:main}.

The symmetries given by $(a)$ and $(b)$ were used by Li in \cite{ly}
 to obtain multiple positive solutions to a subcritical problem in expanding annuli.  As pointed out by Byeon in \cite{b}, the argument given in \cite{ly} does not carry over to dimension $3$. The same thing happens in our situation: the energy bounds for the minimizers, when $N=3$, do not allow us to distinguish them apart; see Theorem \ref{thm:N=3} below. We believe that Theorem \ref{thm:main} is true also in dimension $3$, but the proof requires a different argument. In fact, Cao and Noussair obtained a similar result in \cite{cn} for a semilinear elliptic equation with positive mass and subcritical nonlinearity, which includes dimension $N=3$.

We wish to stress that, as our solutions are obtained by minimization in a suitable symmetric setting, in contrast to the situation considered in \cite{cm,km}, we do not need any special properties of the positive solution to the limit problem \eqref{limprob}, such as uniqueness or nondegeneracy.

This paper is organized as follows: in Section \ref{sec:limprob} we introduce the variational setting for problem \eqref{prob}. Section \ref{sec:concentration} is devoted to the proof of a splitting lemma for the varying domains $\Omega_R$ in a symmetric setting. In Section \ref{sec:upperbound} we obtain an upper bound for the energy of the symmetric minimizers, that will allow us to derive their existence. Section \ref{sec:multiplicity} contains the proof of our main result. Finally, in Section \ref{sec:N=3} we briefly discuss the $3$-dimensional case.


\section{The limit problem and the variational setting} \label{sec:limprob}

For $s<0$ we define $f(s):=-f(-s)$. Then $f\in\mathcal{C}^1(\mathbb{R})$. Note that, if $u$ is a positive solution of the problem (\ref{prob}) for this new function, it is also a solution of (\ref{prob}) for the original function $f$. Hereafter, $f$ will denote this extension. We will assume throughout that $N\geq 4$ and that $f$ satisfies $(f1)-(f2)$.

Let $D^{1,2}(\mathbb{R}^{N}):=\{u\in L^{2^*}(\mathbb{R}^{N}):\nabla u\in L^{2}(\mathbb{R}^{N},\mathbb{R}^{N})\}$, with its standard scalar product and norm
\[
\langle u,v\rangle :=\int_{\mathbb{R}^{N}}\nabla u\cdot\nabla v,\text{\qquad}\|u\| :=\left(  \int_{\mathbb{R}^{N}}|\nabla u|^{2}\right)^{1/2}.
\]

Since $f\in\mathcal{C}^{1}(\mathbb{R})$ and $f$ satisfies $(f1)$, the limit problem \eqref{limprob} has a ground state solution $\omega\in\mathcal{C}^{2}(\mathbb{R}^{N})$, which is positive, radially symmetric and decreasing in the radial direction; see \cite[Theorem 4]{bl}.

Assumption $(f1)$ implies that $|f(s)|\leq A_{1}|s|^{2^*-1}$ and $|f'(s)|\leq A_{1}|s|^{2^*-2}$, and assumption $(f2)$ yields that $f(s)>0$ if $s>0.$ Therefore, every positive solution $u$ to \eqref{limprob} satisfies the decay estimates
\begin{align} 
A_{2}(1+|x|)^{-(N-2)}\leq u(x) \leq A_{3}(1+|x|)^{-(N-2)}, \label{eq:decay} \\
|\nabla u(x)|\leq A_{3}(1+|x|)^{-(N-1)}, \nonumber
\end{align}
for some positive constants $A_{2}$ and $A_{3}$, and $u$ is radially symmetric and strictly radially decreasing about some point in $\mathbb{R}^{N}$; see Theorem 1.1 and Corollary 1.2 in \cite{v}.

Let $2<p<2^{\ast}<q$. The following proposition, combined with assumption
$(f1),$ provides the interpolation and boundedness properties that are needed
to obtain a good variational setting.

\begin{proposition}
\label{prop:bpr}Let $\alpha,\beta>0$ and $h\in\mathcal{C}^{0}(\mathbb{R})$.
Assume that $\frac{\alpha}{\beta}\leq\frac{p}{q},$ $\beta\leq q,$ and there
exists $M>0$ such that
\[
\left\vert h(s)\right\vert \leq M\min\{\left\vert s\right\vert ^{\alpha
},\left\vert s\right\vert ^{\beta}\}\text{\qquad for every }s\in
\mathbb{R}\text{.}%
\]
Then, for every $t\in\left[  \frac{q}{\beta},\frac{p}{\alpha}\right]  ,$ the
map $D^{1,2}(\mathbb{R}^{N})\rightarrow L^{t}(\mathbb{R}^{N})$ given by
$u\mapsto h(u)$ is well defined, continuous and bounded.
\end{proposition}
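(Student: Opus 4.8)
The plan is to reduce the whole statement to the continuous Sobolev embedding \(D^{1,2}(\mathbb{R}^N)\hookrightarrow L^{2^*}(\mathbb{R}^N)\) together with the continuity of the superposition operator on Lebesgue spaces, the bridge between the two being a single pointwise estimate. First I would note that, since \(\frac{\alpha}{\beta}\le\frac{p}{q}<1\) (because \(p<q\)), one has \(\alpha<\beta\), so that \(\min\{|s|^{\alpha},|s|^{\beta}\}\) equals \(|s|^{\beta}\) for \(|s|\le1\) and \(|s|^{\alpha}\) for \(|s|\ge1\). Fixing \(t\in\left[\frac{q}{\beta},\frac{p}{\alpha}\right]\) — an interval which is nonempty precisely because \(\frac{\alpha}{\beta}\le\frac{p}{q}\), and whose left endpoint satisfies \(\frac{q}{\beta}\ge1\) thanks to \(\beta\le q\), so that \(t\ge1\) — I would record the elementary consequences of the endpoints, namely \(\alpha t\le p<2^*\) and \(\beta t\ge q>2^*\). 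Distinguishing \(|s|\le1\) from \(|s|\ge1\), these yield
\[
|h(s)|^{t}\le M^{t}\min\{|s|^{\alpha t},|s|^{\beta t}\}\le M^{t}|s|^{2^*}\qquad\text{for every }s\in\mathbb{R},
\]
equivalently \(|h(s)|\le M|s|^{2^*/t}\).

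From this pointwise bound, well-definedness and boundedness are immediate. Integrating and invoking the Sobolev inequality \(\|u\|_{L^{2^*}}\le S\|u\|\), I would get
\[
\int_{\mathbb{R}^N}|h(u)|^{t}\le M^{t}\int_{\mathbb{R}^N}|u|^{2^*}=M^{t}\|u\|_{L^{2^*}}^{2^*}\le M^{t}S^{2^*}\|u\|^{2^*},
\]
so that \(h(u)\in L^{t}(\mathbb{R}^N)\) and \(\|h(u)\|_{L^{t}}\le C\|u\|^{2^*/t}\); in particular \(u\mapsto h(u)\) sends bounded subsets of \(D^{1,2}(\mathbb{R}^N)\) to bounded subsets of \(L^{t}(\mathbb{R}^N)\).

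For continuity I would factor the map as \(D^{1,2}(\mathbb{R}^N)\hookrightarrow L^{2^*}(\mathbb{R}^N)\xrightarrow{\,h\,}L^{t}(\mathbb{R}^N)\). The embedding is continuous, so it suffices to show that the Nemytskii operator \(u\mapsto h(u)\) is continuous from \(L^{2^*}\) into \(L^{t}\); since \(h\) is continuous and \(|h(s)|\le M|s|^{2^*/t}\), the growth exponent \(2^*/t\) being exactly the ratio of the source exponent \(2^*\) to the target exponent \(t\), this is the classical Krasnoselskii continuity theorem. For a self-contained argument I would proceed by contradiction: assuming \(u_n\to u\) in \(L^{2^*}\) but \(\|h(u_n)-h(u)\|_{L^{t}}\ge\delta>0\) along a subsequence, I would pass to a further subsequence along which \(u_n\to u\) almost everywhere and \(|u_n|\le g\) almost everywhere for some \(g\in L^{2^*}(\mathbb{R}^N)\) (the standard domination lemma for \(L^{2^*}\)-convergent sequences). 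Then \(h(u_n)\to h(u)\) almost everywhere by continuity of \(h\), while \(|h(u_n)-h(u)|^{t}\le C\bigl(g^{2^*}+|u|^{2^*}\bigr)\in L^{1}(\mathbb{R}^N)\), so dominated convergence forces \(\|h(u_n)-h(u)\|_{L^{t}}^{t}\to0\), a contradiction.

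The only genuinely delicate point is this continuity step, and within it the extraction of an almost-everywhere convergent, \(L^{2^*}\)-dominated subsequence; everything else is bookkeeping around the pointwise estimate. I would expect no obstruction specific to this setting: the infinite measure of \(\mathbb{R}^N\) is harmless because the growth of \(h\) is a pure power with no additive constant (indeed \(h(0)=0\)), so no non-integrable constant term ever enters the \(L^{t}\)-estimates.
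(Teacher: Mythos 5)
Your proof is correct. Note that the paper gives no internal argument for this proposition: it simply defers to \cite[Proposition 3.5]{bpr} and \cite[Proposition 3.1]{cm}, so the comparison is really with those references, which work in the sum-space framework of $L^{p}+L^{q}$ and treat the two growth regimes of $h$ separately, splitting the integrals over the sets $\{|u|\leq 1\}$ and $\{|u|>1\}$ and using the exponent $\beta$ on the first and $\alpha$ on the second. Your route is genuinely different and more economical: you observe that the hypotheses are exactly calibrated so that $\alpha t\leq p<2^{\ast}<q\leq\beta t$ and $t\geq 1$, which collapses the two regimes into the single pointwise bound $|h(s)|\leq M|s|^{2^{\ast}/t}$, after which everything reduces to the Sobolev embedding $D^{1,2}(\mathbb{R}^{N})\hookrightarrow L^{2^{\ast}}(\mathbb{R}^{N})$ and the Krasnoselskii continuity theorem for Nemytskii operators (with the source exponent $2^{\ast}$ and target exponent $t$ in the critical ratio, so the standard subsequence/domination argument you sketch applies verbatim). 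All the steps check out: the interval $[q/\beta,p/\alpha]$ is nonempty precisely under $\alpha/\beta\leq p/q$, the condition $\beta\leq q$ gives $t\geq1$ (needed both for the convexity inequality $|a-b|^{t}\leq 2^{t-1}(|a|^{t}+|b|^{t})$ and for $L^{t}$ to be normed), and the absence of an additive constant in the growth bound makes the infinite measure of $\mathbb{R}^{N}$ harmless. What the reference's heavier machinery buys — and what your reduction does not aim to provide — is the uniformity in $t$ over the whole admissible range and the differentiability of the associated integral functionals, which the paper needs elsewhere (e.g.\ for $\Phi\in\mathcal{C}^{2}$ and $\Psi\in\mathcal{C}^{1}$); but for the statement as written your argument is complete.
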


\begin{proof}
See \cite[Proposition 3.5]{bpr} and \cite[Proposition 3.1]{cm}.
\end{proof}

Let $F(u):=\int_{0}^{u}f(s)\,\mathrm{d}s\text{.}$ As $\left\vert F(s)\right\vert \leq A_{1}\left\vert s\right\vert ^{2^{\ast}%
}$ and $\left\vert f(s)\right\vert \leq A_{1}\left\vert s\right\vert
^{2^{\ast}-1}$ by assumption $(f1)$, the functionals $\Phi$, $\Psi:D^{1,2}%
(\mathbb{R}^{N})\rightarrow\mathbb{R}$ given by
\[
\Phi(u):=\int_{\mathbb{R}^{N}}F(u),\qquad\Psi(u):=\int_{\mathbb{R}^{N}}f(u)u
\]
are well defined. Using Proposition \ref{prop:bpr} it is easy to show that
$\Phi$ is of class $\mathcal{C}^{2}$ and $\Psi$ is of class $\mathcal{C}^{1}$;
see \cite[Lemma 2.6]{bm} or \cite[Proposition 3.8]{bpr}. Hence, the
functional $J:D^{1,2}(\mathbb{R}^{N})\to\mathbb{R}$ given by
\[
J(u):=\frac{1}{2}\int_{\mathbb{R}^{N}}|\nabla u|^{2}-\int_{\mathbb{R}^{N}}F(u),
\]
is of class $\mathcal{C}^{2},$ with derivative
\[
J^{\prime}(u)v=\int_{\mathbb{R}^{N}}\nabla u\cdot\nabla v -\int_{\mathbb{R}^{N}}f(u)v,\qquad u,v\in D^{1,2}%
(\mathbb{R}^{N}),
\]
and the functional $D^{1,2}(\mathbb{R}^{N})\rightarrow\mathbb{R}$
defined by%
\[
u\mapsto J^{\prime}(u)u=\int_{\mathbb{R}^{N}}|\nabla u|^{2} -\int_{\mathbb{R}^{N}}f(u)u,
\]
is of class $\mathcal{C}^{1}.$


\section{Symmetries and concentration}
\label{sec:concentration}

For $R>0$, let $\Omega_R:=\{x \in \mathbb{R}^N:|u|>R\}$. As usual, $D^{1,2}_0(\Omega_R)$ denotes the closure of $\mathcal{C}^\infty _c(\Omega_R)$ in $D^{1,2}(\mathbb{R}^N)$.

Let $G$ be a closed subgroup of the group $O(N)$ of linear isometries of $\mathbb{R}^N$. A function $u:\mathbb{R}^N \to \mathbb{R}$ is called $G$-invariant if
$$u(gx)=u(x) \qquad \text{for every } g\in G, \, x\in \mathbb{R}.$$
By the principle of symmetric criticality \cite{p}, the $G$-invariant solutions to the problem $(\ref{prob})$ are the critical points of the functional $J$ restricted to the space 
$$D_0^{1.2}(\Omega_R)^G:=\{u \in D_0^{1,2}(\Omega_R):u \, \text{is }G\text{-invariant}\}.$$
The nontrivial ones belong to the set
$$\mathcal{N}_{R}^G:=\{u\in D_0^{1,2}(\Omega_R)^G:u\neq0,\,J'(u)u=0\}.$$
It is shown in \cite{el} that, under our assumptions on $f$, the problem (\ref{prob}) has a positive radial solution; see also Theorem \ref{thm:existence} below. Therefore, $\mathcal{N}_{R}^G \neq \emptyset$ and, hence,
$$c_R^G:= \inf\limits_{u \in \mathcal{N}_{R}^G}J(u)<\infty.$$
Our assumptions on $f$ also imply that $\mathcal{N}_{R}^G$ is a $\mathcal{C}^1$-submanifold of $D_0^{1,2}(\Omega_R)^G$ and a natural constraint for the functional $J$; see \cite[Lemma 3.2]{cm}.

If $G=\{1\}$ is the trivial group, we write $\mathcal{N}_{R}$ and $c_R$ instead of $\mathcal{N}_{R}^{\{1\}}$ and $c_R^{\{1\}}$. Setting $\Omega_0:=\mathbb{R}^N$ we have that $\mathcal{N}_{0}$ and $c_0$ are the Nehari manifold and the ground state energy of the limit problem (\ref{limprob}), and there exists $\varrho >0$ such that
\begin{equation}
\label{eq:nehari}
\|u\| \geq \varrho \qquad \text{for every }\,u\in \mathcal{N}_0;
\end{equation}
see, e.g., \cite[Lemma 3.2]{cm}.

Note that, if $H$ is a closed subgroup of $G$, then 
\begin{equation}
\label{eq:cG}
\mathcal{N}_{R}^G \subset \mathcal{N}_{R}^H \subset \mathcal{N}_{R} \subset \mathcal{N}_{0} \quad \text{and} \quad c_R^G \geq c_R^H \geq c_R \geq c_0 >0.
\end{equation}
Note also that
\begin{equation}
\label{eq:cR}
\mathcal{N}_{R}^G \subset \mathcal{N}_{S}^G \subset \mathcal{N}_{0}^G  \quad \text{and} \quad c_R^G \geq c_S^G \geq c_0^G=c_0 \quad \text{if } S\leq R.
\end{equation}
Next, we introduce the groups that will play a role in the proof of our main result. 

Let $\mathbb{S}^{1}:=\{\mathrm{e}^{\mathrm{i}\vartheta}:\vartheta\in [0,2\pi)\}$ be the group of unit complex numbers. The proper closed subgroups of $\mathbb{S}^{1}$ are the cyclic groups
$$\mathbb{Z}_n: = \left\{\mathrm{e}^{\frac{2\pi\mathrm{i}j}{n}}:j=0,...,n-1\right\}.$$
For $2 \leq n \leq \infty$, we define
$$\Gamma_{\infty}:= \mathbb{S}^{1} \times O(N-2) \qquad \text{and} \qquad \Gamma_n := \mathbb{Z}_n \times O(N-2).$$
The group $\Gamma_{\infty}$ acts coordenatewise on $\mathbb{R}^N \equiv \mathbb{C} \times \mathbb{R}^{N-2}$, i.e., if $(\alpha,\beta)\in\mathbb{S}^{1} \times O(N-2)$ and $(z,y)\in \mathbb{C} \times \mathbb{R}^{N-2}$, then
$$(\alpha,\beta)(z,y):=(\alpha z,\beta y).$$
As usual, we write $Gx:=\{gx: g \in G\}$ for the $G$-orbit of a point $x \in \mathbb{R}^N$.

Our aim is to prove a splitting lemma for the moving domains $\Omega_R$; see Lemma \ref{lem:splitting} below. We start with the following auxiliary lemmas.

\begin{lemma}
\label{lem:orbits}
Let $2 \leq n \leq \infty$ and $(x_k )$ be a sequence in $\mathbb{R}^N$. After passing to a subsequence, there exists a sequence $(\xi_k)$ in $\mathbb{R}^N$ and a constant $C_0 >0$ such that
$$\mathrm{dist}(\Gamma_n x_k,\xi_k) \leq C_0 \qquad\text{for all }\,k,$$
and one of the following statements holds true:
\begin{itemize}
\item either $\xi_k =0$ for all $k$,
\item or $n < \infty$, $\xi_k = (\zeta_k,0) \in \mathbb{C} \times \mathbb{R}^{N-2}$ and $|\alpha \zeta_k - \tilde{\alpha}\zeta_k| \to \infty$ for every $\alpha, \tilde{\alpha} \in \mathbb{Z}_n$ with $\alpha \neq \tilde{\alpha}$,
\item or, for each $m \in \mathbb{N}$, there exist $\gamma_1,\ldots,\gamma_m \in \Gamma_n$ such that $|\gamma_i \xi_k - \gamma_j \xi_k| \to \infty$ if $i \neq j$.
\end{itemize}
\end{lemma}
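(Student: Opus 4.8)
The plan is to reduce everything to the asymptotic behavior of the two components of $x_k=(z_k,y_k)\in\mathbb{C}\times\mathbb{R}^{N-2}$, exploiting that every element of $\Gamma_n$ is a linear isometry. First I would pass to a subsequence so that both $|z_k|$ and $|y_k|$ converge in $[0,\infty]$. The key elementary observation is that, since $\Gamma_n$ acts by isometries, every point of the orbit $\Gamma_n x_k$ has norm $|x_k|$, whence $\mathrm{dist}(\Gamma_n x_k,0)=|x_k|$. Thus, if $(|x_k|)$ is bounded, I may take $\xi_k=0$ and $C_0=\sup_k|x_k|$, landing in the first alternative. Otherwise $|x_k|\to\infty$, so $|z_k|\to\infty$ or $|y_k|\to\infty$, and I split into the two remaining cases accordingly.

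Suppose first that $|y_k|$ stays bounded while $|z_k|\to\infty$. Here I would choose $\xi_k=(z_k,0)$; since the orbit contains $(z_k,y_k)$, one has $\mathrm{dist}(\Gamma_n x_k,\xi_k)\le|y_k|\le\sup_k|y_k|=:C_0$. If $n<\infty$, then for distinct $\alpha,\tilde\alpha\in\mathbb{Z}_n$ one computes $|\alpha z_k-\tilde\alpha z_k|=|\alpha-\tilde\alpha|\,|z_k|\to\infty$, which is exactly the second alternative with $\zeta_k=z_k$. If instead $n=\infty$, the circle group $\mathbb{S}^1$ is available: fixing $m$ distinct $\alpha_1,\dots,\alpha_m\in\mathbb{S}^1$ and setting $\gamma_i=(\alpha_i,1)$ gives $|\gamma_i\xi_k-\gamma_j\xi_k|=|\alpha_i-\alpha_j|\,|z_k|\to\infty$ for $i\ne j$, which is the third alternative.

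It remains to treat the case $|y_k|\to\infty$. Here I would take $\xi_k=x_k$, so $\mathrm{dist}(\Gamma_n x_k,\xi_k)=0$, and spread the orbit using $O(N-2)$. After a further subsequence I may assume $y_k/|y_k|\to v$ for some unit vector $v\in\mathbb{R}^{N-2}$. Since $N\ge4$, the unit sphere of $\mathbb{R}^{N-2}$ is infinite and $O(N-2)$ acts transitively on it, so for any $m$ I can pick $\beta_1,\dots,\beta_m\in O(N-2)$ with $\beta_1v,\dots,\beta_m v$ pairwise distinct. Setting $\gamma_i=(1,\beta_i)$ yields
\[
|\gamma_i\xi_k-\gamma_j\xi_k|=|\beta_i y_k-\beta_j y_k|=|y_k|\,\Big|\beta_i\tfrac{y_k}{|y_k|}-\beta_j\tfrac{y_k}{|y_k|}\Big|\longrightarrow\infty,
\]
because the last factor converges to $|\beta_i v-\beta_j v|>0$ while $|y_k|\to\infty$; this is again the third alternative.

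The main obstacle is the construction in the third alternative, and in particular making the spreading elements $\gamma_i$ independent of $k$. For the circle factor this is automatic, because $\mathbb{S}^1$ scales distances by exactly $|z_k|$; but for the $O(N-2)$ factor one must first stabilize the direction $y_k/|y_k|$ by passing to a subsequence, and then one genuinely needs the sphere of $\mathbb{R}^{N-2}$ to contain arbitrarily many points, i.e. $N\ge4$. This is precisely what fails when $N=3$, where $S^0$ has only two points. It is also the reason the second alternative cannot be absorbed into the third: when $n<\infty$ and only the $z$-component diverges, the orbit contains at most $n$ pairwise-distant points, so no more than $n$ separated copies can ever be extracted.
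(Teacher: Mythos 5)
Your proof is correct and follows essentially the same case analysis as the paper's: bounded sequences give $\xi_k=0$, a divergent $z$-component is handled via the (finite or infinite) circle factor, and a divergent $y$-component is handled by stabilizing $y_k/|y_k|$ along a subsequence and separating the limit direction with elements of $O(N-2)$, which is exactly where $N\geq 4$ enters. The only cosmetic difference is organizational: the paper treats $n=\infty$ in a single case by observing that every nontrivial $\Gamma_\infty$-orbit is an infinite set, whereas you split it according to which component diverges.
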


\begin{proof}
Write $x_k = (z_k,y_k) \in \mathbb{C} \times \mathbb{R}^{N-2}$. There are four possibilities:

$(i)$ If $(x_k )$ is bounded, we set $\xi_k =0$ for all $k$.

$(ii)$ If $n < \infty$, $(x_k )$ is unbounded and $(y_k )$ is bounded, then $(z_k)$ is unbounded. So, passing to a subsequence, we get that $z_k \neq 0$ and, setting $\xi_k := (z_k,0)$, we have that $\mathrm{dist}(\Gamma_n x_k,\xi_k) = |y_k|$ and 
$$|\alpha z_k - \tilde{\alpha}z_k| \geq |\mathrm{e}^{2\pi\mathrm{i}/n} -1|\,|z_k| \to \infty$$
for every $\alpha, \tilde{\alpha} \in \mathbb{Z}_n$ with $\alpha \neq \tilde{\alpha}$.

$(iii)$ If $n < \infty$ and $(y_k )$ is unbounded then, after passing to a subsequence, we get that $y_k \neq 0$ and 
$$\frac{y_k}{|y_k|} \to y.$$
As $y$ lies on the unit sphere $\mathbb{S}^{N-3} \subset \mathbb{R}^{N-2}$ and  $N \geq 4$, for each $m \in \mathbb{N}$ there exist $\beta_1,\ldots,\beta_m \in O(N-2)$ such that $\beta_i y \neq \beta_j y$ if $i \neq j$. Then, there exist $\delta >0$ and $k_0 \in \mathbb{N}$ such that 
$$\left|\beta_i \frac{y_k}{|y_k|} - \beta_j \frac{y_k}{|y_k|}\right| \geq \delta \qquad \text{if } i\neq j \text{ and }k\geq k_0.$$
Setting $\xi_k := x_k$ and $\gamma_i := (1,\beta_i)$, we have that $\mathrm{dist}(\Gamma_n x_k,\xi_k)=0$ and $|\gamma_i \xi_k - \gamma_j \xi_k| = |\beta_i y_k - \beta_j y_k| \geq \delta|y_k| \to \infty$ if $i \neq j$. 

$(iv)$ If $n = \infty$ and $(x_k)$ is unbounded, passing to a subsequence, we get that $x_k \neq 0$. Since the $\Gamma_{\infty}$-orbit of every $x \neq 0$ is homeomorphic either to $\mathbb{S}^1$ or to $\mathbb{S}^1 \times \mathbb{S}^{N-3}$ or to $\mathbb{S}^{N-3}$, as $N \geq 4$, it is an infinite set. So, setting $\xi_k := x_k$ and arguing as in the previous case, we get that $\mathrm{dist}(\Gamma_{\infty} x_k,\xi_k)=0$ and that, for each $m \in \mathbb{N}$, there exist $
\gamma_1,\ldots,\gamma_m \in \Gamma_{\infty}$ such that $|\gamma_i \xi_k - \gamma_j \xi_k| \geq \delta|x_k| \to \infty$ if $i \neq j$.
\end{proof}

\begin{lemma}
\label{lem:bl}
If $u_{k}\rightharpoonup u$ weakly in $D^{1,2}(\mathbb{R}^{N})$ then, passing to a subsequence,
\begin{enumerate}
\item[$(a)$] $\int_{\mathbb{R}^{N}}|f(u_{k})-f(u)||\varphi| = o(1)$ \ for every $\varphi\in\mathcal{C}
_c^{\infty}(\mathbb{R}^{N})$,
\item[$(b)$] $ \int_{\mathbb{R}^{N}}F(u_{k}) - \int_{\mathbb{R}^{N}}F(u_{k}-u) = \int_{\mathbb{R}^{N}}F(u)+o(1)$,
\item[$(c)$]$\int_{\mathbb{R}^N}f(u_k)u_k - \int_{\mathbb{R}^N}f(u_k - u)[u_k- u] = \int_{\mathbb{R}^N}f(u)u + o(1),$
\item[$(d)$] $f(u_{k})-f(u_{k}-u)\to f(u)$ in $(D^{1,2}(\mathbb{R}^{N}))'$.
\end{enumerate}
\end{lemma}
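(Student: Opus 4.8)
The plan is to establish each of the four statements using the growth bounds from $(f1)$—specifically $|f(s)|\le A_1|s|^{2^*-1}$ and $|F(s)|\le A_1|s|^{2^*}$, together with the intermediate subcritical/supercritical estimates furnished by Proposition \ref{prop:bpr}—combined with the standard mechanisms behind the Brezis–Lieb lemma: weak convergence gives $u_k\to u$ strongly in $L^{2^*}_{\mathrm{loc}}$ and pointwise a.e. (after passing to a subsequence), while boundedness of $(u_k)$ in $D^{1,2}$ gives uniform $L^{2^*}$ bounds. First I would fix a subsequence along which $u_k\to u$ a.e.\ in $\mathbb{R}^N$ and $u_k\to u$ strongly in $L^{2^*}(B_\rho)$ for every ball $B_\rho$, and record that $\sup_k\|u_k\|_{2^*}<\infty$.

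For part $(a)$, since $\varphi$ has compact support, say in $B_\rho$, I would use that $u_k\to u$ in $L^{2^*}(B_\rho)$ and that $u\mapsto f(u)$ is continuous and bounded from $D^{1,2}(\mathbb{R}^N)$ into $L^{(2^*)'}(\mathbb{R}^N)$ by Proposition \ref{prop:bpr} (applied with exponents matching $(f1)$). Then $f(u_k)\to f(u)$ in $L^{(2^*)'}(B_\rho)$—which follows from a.e.\ convergence plus equi-integrability, or directly from continuity of the Nemytskii map—and Hölder against $\varphi\in L^{2^*}$ yields the claim. For parts $(b)$ and $(c)$, the key device is the classical pointwise Brezis–Lieb algebraic identity: writing $G$ for either $F$ or the map $s\mapsto f(s)s$, one has for each $\eps>0$ a bound
$$|G(a+b)-G(a)-G(b)|\le \eps\,|a|^{2^*}+C_\eps\,|b|^{2^*},$$
valid because $G$ has critical growth $2^*$. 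Applying this with $a=u_k-u$ and $b=u$, integrating, and using that $(u_k-u)$ is bounded in $L^{2^*}$ while the terms involving $b=u$ are fixed and integrable, the $\eps$-term is controlled uniformly and the $C_\eps$-term is dominated by an $L^1$ function; letting $k\to\infty$ (with $u_k-u\to 0$ a.e.) and then $\eps\to0$ gives exactly $(b)$ and $(c)$.

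Part $(d)$ is the strongest statement, asserting convergence in the dual norm, and I expect it to be the main obstacle. The goal is that $g_k:=f(u_k)-f(u_k-u)-f(u)\to 0$ in $(D^{1,2})'=L^{(2^*)'}$ (since the dual of $D^{1,2}$ is realized via this Sobolev-conjugate Lebesgue space). The pointwise a.e.\ limit of $g_k$ is $0$, so the difficulty is purely the uniform control needed to upgrade a.e.\ convergence to norm convergence in $L^{(2^*)'}$. Here I would again invoke a Brezis–Lieb type inequality at the level of $f$ itself,
$$|f(a+b)-f(a)-f(b)|\le \eps\,|a|^{2^*-1}+C_\eps\,|b|^{2^*-1},$$
raise to the power $(2^*)'$, integrate, split the $\mathbb{R}^N$ integral into a large ball $B_\rho$ (where $u_k-u\to 0$ strongly in $L^{2^*}$ so the $\eps$-term vanishes and the a.e.\ convergence with dominated convergence handles the rest) and its complement (where the tail $\int_{\mathbb{R}^N\setminus B_\rho}|u|^{2^*}$ is small, controlling the $C_\eps$-term), and then send $k\to\infty$, $\rho\to\infty$, $\eps\to0$ in that order. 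The care required is in keeping the $\eps$-contribution uniformly bounded by $\sup_k\|u_k-u\|_{2^*}^{2^*-1}$ so that it survives the limits; this is the step where the precise exponent $(2^*)'$ and the critical growth of $f$ must be matched exactly, and it is the crux of the argument.
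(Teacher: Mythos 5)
Your overall strategy---the pointwise Brezis--Lieb inequality $|G(a+b)-G(a)-G(b)|\le \eps|a|^{2^*}+C_\eps|b|^{2^*}$ for $G=F$ and $G(s)=f(s)s$, and its analogue at the level of $f$ with exponent $2^*-1$ followed by integration of the $(2^*)'$-th power---is the standard route, and it is essentially the one the paper relies on (the paper gives no details: it cites Lemma 3.8 of \cite{cm} for $(a)$, $(b)$, $(d)$ and says $(c)$ is similar). The inequalities you need do hold here, since $(f1)$ gives $|F(s)|\le A_1|s|^{2^*}$, $|f(s)|\le A_1|s|^{2^*-1}$ and $|f'(s)|\le A_1|s|^{2^*-2}$, the $C_\eps$-majorants $C_\eps|u|^{2^*}$ (resp.\ $C_\eps|u|^{2^*-1}$ raised to the power $(2^*)'$) are fixed $L^1$ functions, and dominated convergence then applies globally on $\mathbb{R}^N$. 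For $(d)$, note that $(D^{1,2}(\mathbb{R}^N))'$ is not equal to $L^{(2^*)'}(\mathbb{R}^N)$; one only has the continuous embedding $L^{(2^*)'}\hookrightarrow (D^{1,2})'$ dual to the Sobolev embedding, but that is all your argument needs.

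The one step that would fail as written is your opening claim that weak convergence in $D^{1,2}(\mathbb{R}^N)$ yields $u_k\to u$ strongly in $L^{2^*}_{\mathrm{loc}}$. Rellich--Kondrachov is compact only for subcritical exponents $r<2^*$; at the critical exponent local compactness fails (concentrating Aubin--Talenti bubbles converge weakly to $0$ while retaining fixed $L^{2^*}$ mass inside any given ball). You invoke this false fact both in $(a)$ and in the ball-splitting step of $(d)$. Fortunately neither use is necessary. For $(a)$, your alternative mechanism already suffices: $f(u_k)\to f(u)$ a.e., and $\sup_k\int_K|f(u_k)|^{(2^*)'}\le A_1^{(2^*)'}\sup_k\int_{\mathbb{R}^N}|u_k|^{2^*}<\infty$ gives uniform integrability on the compact set $K=\mathrm{supp}\,\varphi$, so Vitali's theorem yields $f(u_k)\to f(u)$ in $L^1(K)$, which is statement $(a)$. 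For $(d)$, there is no need to make the $\eps$-term vanish on a large ball: the standard argument keeps $\bigl[(|f(u_k)-f(u_k-u)-f(u)|-\eps|u_k-u|^{2^*-1})^+\bigr]^{(2^*)'}$ dominated by $C_\eps^{(2^*)'}|u|^{2^*}$ and tending to $0$ a.e., so its integral vanishes, while the remaining contribution is bounded by $C\eps^{(2^*)'}\sup_k|u_k-u|_{2^*}^{2^*}$ uniformly in $k$; one sends $k\to\infty$ first and $\eps\to0$ last, exactly as your final sentence indicates. So the proof is repairable by deleting the $L^{2^*}_{\mathrm{loc}}$ claim and leaning on the mechanisms you already name, but as stated that step is incorrect.
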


\begin{proof}
The proof of this lemma relies on Proposition \ref{prop:bpr}. Statements $(a)$, $(b)$ and $(d)$ are proved in \cite[Lemma 3.8]{cm}. The proof of $(c)$ uses a similar argument.
\end{proof}

The following vanishing lemma is crucial for the proof of Lemma \ref{lem:splitting} below. We write $B_{\varrho}(y):=\{x\in\mathbb{R}^{N}:\left\vert x-y\right\vert <\varrho\}.$

\begin{lemma}
\label{lem:lions}If $(u_{k})$ is bounded in $D^{1,2}(\mathbb{R}^{N})$ and
there exists $\varrho>0$ such that%
\[
\lim_{k\rightarrow\infty}\left(  \sup_{y\in\mathbb{R}^{N}}\int_{B_{\varrho}(y)}\left\vert u_{k}\right\vert ^{2}\right)  =0,
\]
then $\lim_{k\rightarrow\infty}\int_{\mathbb{R}^{N}}f(u_{k})u_{k}=0$.
\end{lemma}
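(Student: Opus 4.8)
The plan is to reduce the integrand to a single subcritical power and then apply a vanishing argument of Lions' type. Since $2<p<2^{\ast}$, I fix any exponent $r\in(p,2^{\ast})$. Assumption $(f1)$ gives $|f(s)s|\le A_{1}\min\{|s|^{p},|s|^{q}\}$, and because $p\le r\le q$ one checks that $\min\{|s|^{p},|s|^{q}\}\le|s|^{r}$ for every $s\in\R$: the minimum equals $|s|^{q}$ when $|s|\le 1$ (and $q\ge r$) and $|s|^{p}$ when $|s|\ge 1$ (and $p\le r$). Using that $f$ is odd and $f(s)s\ge\theta F(s)\ge 0$ by $(f2)$, we thus have $0\le\int_{\R^{N}}f(u_{k})u_{k}\le A_{1}\int_{\R^{N}}|u_{k}|^{r}$, so it suffices to prove that $\int_{\R^{N}}|u_{k}|^{r}\to 0$.

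To control $\int_{\R^{N}}|u_{k}|^{r}$ I would invoke the vanishing principle in the form: a sequence bounded in $D^{1,2}(\R^{N})$ whose local $L^{2}$ mass vanishes uniformly converges to $0$ in $L^{r}$ for every subcritical $r\in(2,2^{\ast})$. The mechanism is the standard covering argument: cover $\R^{N}$ by balls $B_{\varrho}(y_{i})$ with uniformly bounded overlap and, on each ball, interpolate the subcritical norm between the local $L^{2}$ norm, whose supremum over $y$ tends to $0$ by hypothesis, and the critical norm, which is dominated by the gradient through the Sobolev inequality. A Gagliardo–Nirenberg inequality on each ball then bounds $\|u_{k}\|_{L^{r}(B_{i})}$ by a product of a positive power of $\sup_{y}\int_{B_{\varrho}(y)}|u_{k}|^{2}$ and a power of the $D^{1,2}(B_{i})$-quantities; summing over $i$ and using $\sup_{k}\|u_{k}\|<\infty$, hence $\sup_{k}\|u_{k}\|_{L^{2^{\ast}}}<\infty$ by Sobolev, yields $\int_{\R^{N}}|u_{k}|^{r}\le C\bigl(\sup_{y}\int_{B_{\varrho}(y)}|u_{k}|^{2}\bigr)^{\sigma}\to 0$ for some $\sigma>0$.

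The main obstacle is that $D^{1,2}(\R^{N})$ is a zero-mass space: there is no global $L^{2}$ bound, so one cannot simply sum the local $L^{2}$ contributions as in the $H^{1}$ version of the lemma. This is exactly where the strict inequalities $p<2^{\ast}<q$ must be exploited, and I would organise the estimate in two regions. The supercriticality at the origin ($q>2^{\ast}$) makes $\{|u_{k}|\le\eta\}$ uniformly negligible: there $f(u_{k})u_{k}\le A_{1}\eta^{q-2^{\ast}}|u_{k}|^{2^{\ast}}$, so its integral is at most $A_{1}\eta^{q-2^{\ast}}\sup_{k}\|u_{k}\|_{L^{2^{\ast}}}^{2^{\ast}}$, which is small independently of $k$ once $\eta$ is small. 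On $\{|u_{k}|>\eta\}$ the superlevel set has small measure in every ball by Chebyshev, since its measure there is bounded by $\eta^{-2}\int_{B_{\varrho}(y_{i})}|u_{k}|^{2}$, and the subcriticality at infinity ($p<2^{\ast}$) lets one trade this small measure against the uniform critical bound, so that only the bounded gradient, never a global $L^{2}$ norm, enters the summation. I would therefore choose $\eta$ first and then let $k\to\infty$; the delicate point is to perform the interpolation on $\{|u_{k}|>\eta\}$ so that the fixed per-ball remainder does not accumulate over infinitely many balls, which is precisely what the strict bound $r<2^{\ast}$ (equivalently $p<2^{\ast}$) guarantees.
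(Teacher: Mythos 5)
The paper offers no proof of this lemma: it is quoted verbatim from \cite[Lemma 3.5]{cm}. Your overall strategy in the final paragraph --- split at a threshold $\eta$, use $q>2^{\ast}$ on the set where $|u_{k}|$ is small and $p<2^{\ast}$ where it is large --- is the right one, and your treatment of the small-value region is correct. But there are two genuine gaps. First, the reduction in your opening paragraph is false: for $u_{k}\in D^{1,2}(\R^{N})$ and $r<2^{\ast}$ the integral $\int_{\R^{N}}|u_{k}|^{r}$ need not be finite, so ``it suffices to prove $\int|u_{k}|^{r}\to0$'' replaces a finite quantity by a possibly infinite majorant. Concretely, take $u(x)=(1+|x|)^{-\alpha}$ with $\frac{N-2}{2}<\alpha<\frac{N}{r}$ and $u_{k}(x)=k^{-(N-2)/2}u(x/k)$: this sequence is bounded in $D^{1,2}(\R^{N})$ and satisfies the vanishing hypothesis (the local $L^{2}$ masses are $O(k^{2-N})$), yet $\int_{\R^{N}}|u_{k}|^{r}=\infty$ for every $k$. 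You implicitly abandon this reduction later, but as written it is a wrong step.

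Second, and more importantly, the large-value region is where the whole difficulty sits, and the mechanism you hint at would not close. The per-ball estimate you describe gives $\int_{\{|u_{k}|>\eta\}\cap B_{i}}|u_{k}|^{p}\leq\bigl(\int_{B_{i}}|u_{k}|^{2^{\ast}}\bigr)^{p/2^{\ast}}\bigl(\eta^{-2}\delta_{k}\bigr)^{1-p/2^{\ast}}$ with $\delta_{k}:=\sup_{y}\int_{B_{\varrho}(y)}|u_{k}|^{2}$; summing over the covering then requires controlling $\sum_{i}a_{i}^{p/2^{\ast}}$ where $a_{i}=\int_{B_{i}}|u_{k}|^{2^{\ast}}$ and $\sum_{i}a_{i}\leq C$. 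Since $p/2^{\ast}<1$, this sum can diverge, so the per-ball remainders do accumulate over infinitely many balls and the strict inequality $p<2^{\ast}$ alone does not prevent it. The missing device is a truncation that restores a genuine $H^{1}$ bound: set $v_{k}:=(|u_{k}|-\eta)^{+}$. Then $\|\nabla v_{k}\|_{L^{2}}\leq\|\nabla u_{k}\|_{L^{2}}$, and Chebyshev in $L^{2^{\ast}}$ gives $|\{|u_{k}|>\eta\}|\leq C\eta^{-2^{\ast}}$ uniformly in $k$, whence $\|v_{k}\|_{L^{2}}^{2}\leq\|v_{k}\|_{L^{2^{\ast}}}^{2}\,|\{|u_{k}|>\eta\}|^{2/N}\leq C_{\eta}$; thus $(v_{k})$ is bounded in $H^{1}(\R^{N})$ and inherits the vanishing property, so the classical Lions lemma yields $v_{k}\to0$ in $L^{p}(\R^{N})$. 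Since $|u_{k}|\leq 2v_{k}$ on $\{|u_{k}|\geq2\eta\}$, you get $\int_{\{|u_{k}|\geq2\eta\}}f(u_{k})u_{k}\leq A_{1}2^{p}\int_{\R^{N}}v_{k}^{p}\to0$, while the strip $\{\eta<|u_{k}|<2\eta\}$ is absorbed into your small-value estimate. With this insertion your argument becomes a complete proof, essentially the one given in \cite{cm}.
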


\begin{proof}
See \cite[Lemma 3.5]{cm}.
\end{proof}

Next, we prove a splitting lemma for moving domains.

\begin{lemma}
\label{lem:splitting}
Fix $2 \leq n \leq \infty$, and let $(R_k)$ be a nondecreasing sequence of positive numbers and $u_k \in D^{1,2}_0(\Omega_{R_k})^{\Gamma_n}$ be such that
\begin{enumerate}
\item[$(i)$]$u_k\rightharpoonup 0$ weakly but not strongly in $D^{1,2}(\mathbb{R}^{N})$,
\item[$(ii)$]$J(u_k) \to c$,
\item[$(iii)$]$J'(u_k)\varphi_k \to 0$ for every sequence $(\varphi_k)$ such that $\varphi_k \in D_0^{1,2}(\Omega_{R_k})$ and $(\varphi_k)$ is bounded in $D^{1,2}(\mathbb{R}^N)$.
\end{enumerate}
Then, $n<\infty$ and, after passing to a subsequence, there exist $\xi_k=(\zeta_k,0) \in \Omega_{R_k}$ such that $\mathrm{dist}(\xi_k,\partial\Omega_{R_k}) \to \infty$, and a nontrivial $O(N-2)$-invariant solution $v$ to the limit problem \eqref{limprob} such that $v \geq 0$ if $u_k \geq 0$ for all $k$, and
\begin{align*}
\|u_k\|^2 - \|u_k - w_k\|^2 &= n\|v\|^2 + o(1),\\
\int_{\mathbb{R}^N}F(u_k) - \int_{\mathbb{R}^N}F(u_k - w_k) &= n \int_{\mathbb{R}^N} F(v) + o(1),\\
\int_{\mathbb{R}^N}f(u_k)u_k - \int_{\mathbb{R}^N}f(u_k - w_k)[u_k- w_k] &= n \int_{\mathbb{R}^N}f(v)v + o(1), 
\end{align*}
where
$$w_k(z,y):= \sum\limits_{j=0}^{n-1}v(\mathrm{e}^{-2\pi\mathrm{i}j/n}z - \zeta_k,y).$$
Moreover, $c \geq nJ(v) \geq nc_0$.
\end{lemma}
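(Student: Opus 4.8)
The plan is to analyze the concentration behavior of the sequence $(u_k)$ via the profile decomposition encoded in Lemma \ref{lem:orbits}. Since $u_k \rightharpoonup 0$ weakly but not strongly, the sequence does not vanish, so by Lemma \ref{lem:lions} there exist $\varrho>0$ and a sequence $(x_k)$ in $\mathbb{R}^N$ along which $\int_{B_\varrho(x_k)}|u_k|^2$ stays bounded away from $0$; necessarily $|x_k|\to\infty$ (otherwise the nonvanishing mass would contradict $u_k\rightharpoonup 0$). First I would apply Lemma \ref{lem:orbits} to $(x_k)$ to extract, after a subsequence, a center sequence $(\xi_k)$ with $\mathrm{dist}(\Gamma_n x_k,\xi_k)\le C_0$ falling into one of the three listed alternatives. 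I would then rule out the first and third alternatives. The $\xi_k=0$ case contradicts $|x_k|\to\infty$ together with the bounded distance $\mathrm{dist}(\Gamma_n x_k,\xi_k)\le C_0$. The third alternative, in which arbitrarily many $\Gamma_n$-translates of $\xi_k$ mutually diverge, would force the $\Gamma_n$-invariant function $u_k$ to carry a fixed quantum of energy on each of $m$ disjoint far-apart balls for every $m$, contradicting $J(u_k)\to c<\infty$ and the uniform Nehari lower bound \eqref{eq:nehari}. This leaves only the second alternative, which forces $n<\infty$ and $\xi_k=(\zeta_k,0)$ with $|\alpha\zeta_k-\tilde\alpha\zeta_k|\to\infty$ for distinct $\alpha,\tilde\alpha\in\mathbb{Z}_n$.

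Next I would set $\tilde u_k(x):=u_k(x+\xi_k)$ and pass to a weak limit $\tilde u_k\rightharpoonup v$ in $D^{1,2}(\mathbb{R}^N)$; the nonvanishing choice of $(x_k)$, combined with the bounded distance $\mathrm{dist}(\Gamma_n x_k,\xi_k)\le C_0$ and the invariance of $u_k$, guarantees $v\ne 0$. The translated functions $\tilde u_k$ are $O(N-2)$-invariant in the $y$-variable (the $\mathbb{Z}_n$-action is broken by the off-center translation, but the $O(N-2)$ factor is preserved because $\xi_k$ lies in the $\mathbb{C}$-plane), so $v$ is $O(N-2)$-invariant. To see that $v$ solves the limit problem, I would test $J'(u_k)$ against $\varphi(\,\cdot\,-\xi_k)$ for $\varphi\in\mathcal{C}_c^\infty(\mathbb{R}^N)$: since $\mathrm{dist}(\xi_k,\partial\Omega_{R_k})\to\infty$ (which itself follows because the nonvanishing mass must sit inside $\Omega_{R_k}$ and $(R_k)$ is nondecreasing, so the support constraint recedes to infinity), these test functions eventually lie in $D_0^{1,2}(\Omega_{R_k})$, and hypothesis $(iii)$ together with Lemma \ref{lem:bl}$(a)$ yields $J'(v)\varphi=0$; density gives $J'(v)=0$. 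The sign statement $v\ge 0$ when $u_k\ge 0$ is immediate from the translation and weak limit.

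To obtain the energy splitting, I would introduce $w_k(z,y)=\sum_{j=0}^{n-1}v(\mathrm{e}^{-2\pi\mathrm{i}j/n}z-\zeta_k,y)$, which is the $\mathbb{Z}_n$-symmetrization of the translated profile $v(\,\cdot\,-\xi_k)$. The key geometric fact is that, because $|\alpha\zeta_k-\tilde\alpha\zeta_k|\to\infty$, the $n$ bumps in $w_k$ have pairwise diverging centers, so they are asymptotically orthogonal in $D^{1,2}$ and their nonlinear interactions vanish; this is where the decay estimates \eqref{eq:decay} enter, controlling the cross terms $\int\nabla v(\cdot-\gamma_i\zeta_k)\cdot\nabla v(\cdot-\gamma_j\zeta_k)$ and the corresponding $F$- and $f(u)u$-interactions. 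Combining this asymptotic orthogonality with the single-profile Brezis–Lieb splitting of Lemma \ref{lem:bl}$(b)$, $(c)$ applied after recentering at $\xi_k$, yields the three claimed identities with the factor $n$ accounting for the $n$ translates. Finally, the inequality $c\ge nJ(v)$ follows by inserting these identities into $J(u_k)=J(u_k-w_k)+nJ(v)+o(1)$ and discarding the nonnegative remainder $J(u_k-w_k)$ (nonnegative in the limit because $u_k-w_k$ is asymptotically on or above the Nehari manifold, using $(f2)$), while $J(v)\ge c_0$ follows since $v$ is a nontrivial solution of the limit problem and hence lies on $\mathcal{N}_0$.

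The main obstacle I anticipate is the rigorous control of the interaction terms establishing asymptotic orthogonality of the $n$ translated bumps, and the careful bookkeeping needed to identify which of the three alternatives of Lemma \ref{lem:orbits} survives; ruling out the third (infinitely-many-bumps) alternative against the finite energy budget, and correctly tracking the symmetry reduction from $\Gamma_n$-invariance of $u_k$ to mere $O(N-2)$-invariance of $v$, are the delicate points where the hypotheses $N\ge 4$ and $n<\infty$ are genuinely used.
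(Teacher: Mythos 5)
Your overall architecture matches the paper's: extract a nonvanishing center via Lemma \ref{lem:lions}, organize it with Lemma \ref{lem:orbits}, translate, pass to a weak limit $v$, and split the energy over the $n$ divergent $\mathbb{Z}_n$-translates. But there is one genuine gap, at the step where you assert that $\mathrm{dist}(\xi_k,\partial\Omega_{R_k})\to\infty$ ``because the nonvanishing mass must sit inside $\Omega_{R_k}$ and $(R_k)$ is nondecreasing, so the support constraint recedes to infinity.'' What the nonvanishing of $\int_{B_{C_0+1}(\xi_k)}|u_k|^2$ actually yields is only $\mathrm{dist}(\xi_k,\Omega_{R_k})\le C_0+1$, i.e.\ that $\xi_k$ lies within bounded distance of the domain --- not that it is far from the boundary. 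When $R_k\to\infty$, nothing you have said excludes $\mathrm{dist}(\xi_k,\partial\Omega_{R_k})$ remaining bounded, i.e.\ concentration at bounded distance from the sphere $|x|=R_k$, which locally flattens to a hyperplane. In that scenario your test functions $\varphi(\,\cdot\,-\xi_k)$ are \emph{not} eventually supported in $\Omega_{R_k}$, and the blow-up limit $v$ is a nontrivial solution of $-\Delta v=f(v)$ in $D_0^{1,2}(\mathbb{H})$ for a half-space $\mathbb{H}$ rather than a solution of the limit problem on $\mathbb{R}^N$. The paper closes precisely this case by a contradiction argument resting on the nonexistence of nontrivial solutions on a half-space (\cite{el,v}); that rigidity result is an essential input that your one-line justification does not replace.

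Two smaller points. First, you propose to control the cross-interaction terms using the decay estimates \eqref{eq:decay}; those are stated for \emph{positive} solutions, and since $u_k$ is not assumed positive here, $v$ need not be positive. The paper avoids this by obtaining the three splitting identities through an iterated weak-convergence expansion (subtracting the translates one at a time, using that translates by mutually divergent vectors are asymptotically orthogonal and invoking Lemma \ref{lem:bl}), which requires no pointwise decay. Second, your early elimination of the third alternative of Lemma \ref{lem:orbits} can be made rigorous ($L^2$-mass $a_2$ on $m$ disjoint balls of fixed radius gives, via H\"older and the Sobolev embedding, $|u_k|_{2^*}^{2^*}\ge cm$, contradicting the boundedness of $(u_k)$ for large $m$), but the bound \eqref{eq:nehari} you cite is not the relevant estimate; the paper instead rules out this alternative a posteriori from $c\ge mJ(v)\ge mc_0$ for arbitrary $m$. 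These two items are repairable; the half-space step is the one that must genuinely be added.
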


\begin{proof}
For simplicity, we write $\Omega_k:=\Omega_{R_k}$. 

By property $(i)$, after passing to a subsequence, we may assume that $(u_k)$ is bounded and bounded away from $0$ in $D_0^{1,2}(\mathbb{R}^N)$. Then, property $(iii)$ implies that 
$$\|u_k\|^2-\int_{\mathbb{R}^N}f(u_k)u_k = J'(u_k)u_k = o(1),$$
and using assumption $(f1)$ we obtain that
$$0<a_1<\int_{\mathbb{R}^N}f(u_k)u_k \leq A_0|u_k|_{2^*}^{2^*} ,$$
where $|\cdot|_{2^*}$ is the norm in $L^{2^*}(\mathbb{R}^N)$. By the vanishing lemma (Lemma \ref{lem:lions}), there exist $a_2>0$ and a sequence
$(x_k)$ in $\mathbb{R}^{N}$ such that
$$\int_{B_1(x_k)}|u_k|^{2}=\sup_{x\in\mathbb{R}^{N}}\int_{B_1(x)}|u_k|^{2}\geq a_2>0\quad\text{for all }\,k.$$
For the sequence $(x_k)$ we choose $(\xi_k)$ as in Lemma \ref{lem:orbits}. Then, $|\gamma_k x_k - \xi_k| \leq C_0$ for some $\gamma_k \in \Gamma_n$ and, as $u_k$ is $\Gamma_n$-invariant, we obtain that
\begin{equation}
\label{eq:nonvanishing}
\int_{B_{C_0+1}(\xi_k)}|u_k|^2 \geq \int_{B_1(\gamma_k x_k)}|u_k|^2 \geq a_2\quad\text{for all }\,k.
\end{equation}
Since $\mathrm{supp}(u_k)\subset\Omega_k,$ this implies that
\begin{equation}
\label{eq:unbdd}
\mathrm{dist}(\xi_k,\Omega_k)\leq C_0+1.
\end{equation}

Set $v_k(x):=u_k(x + \xi_k).$ Passing to a subsequence, we have that $v_k\rightharpoonup v$ weakly in $D^{1,2}(\mathbb{R}^{N})$, $v_k\to v$ a.e. in $\mathbb{R}^{N}$ 
and $v_k\to v$ strongly in $L_{\mathrm{loc}}^{2}(\mathbb{R}^{N})$. Then, $v \geq 0$ if $u_k \geq 0$ for all $k$. Inequality \eqref{eq:nonvanishing} implies that $v\neq0$ and, as $u_k\rightharpoonup 0$ weakly in $D^{1,2}(\mathbb{R}^{N})$, we deduce that $|\xi_k| \to \infty$.

Define $\widetilde{\Omega}_k:=\{x\in\mathbb{R}^{N}:x+\xi_k\in\Omega_{k}\}$. Note that, if $\varphi\in\mathcal{C}_c^{\infty}(\mathbb{R}^{N})$ and $\mathrm{supp}(\varphi)\subset\widetilde{\Omega}_k$ for $k$ large enough, then $\varphi_k(x):=\varphi(x-\xi_k)$ satisfies $\varphi_k\in\mathcal{C}_c^{\infty}(\Omega_k)$ for $k$ large enough, and property $(iii)$ yields
\begin{align}
J'(v_k)\varphi & = \int_{\mathbb{R}^{N}}\left(\nabla v_k\cdot\nabla\varphi - f(v_k)\varphi\right)
 \nonumber\\
&=\int_{\Omega_k}\left(\nabla u_k\cdot\nabla\varphi_k  - f(u_k)\varphi_k\right) = o(1). \label{eq:solution}
\end{align}

Set
$$d_k:=\mathrm{dist}(\xi_k,\partial\Omega_k),$$
and consider the interior unit normal $\eta_k:=\frac{\xi_k}{|\xi_k|}$ to $\partial\Omega_k$ at the point $\frac{R_k\xi_k}{|\xi_k|}$. Passing to a subsequence, we have that $\eta_k\to\eta$. We claim that the sequence $(d_k)$ is unbounded. 

If the sequence $(R_k)$ is bounded then, as $|\xi_k| \to \infty$, this is immediately true. 

If $R_k \to \infty$, arguing by contradiction, we assume that $(d_k)$ is bounded. Then, after passing to a subsequence, we have that $d_k \to d\in [0,\infty)$. We consider two cases. If a subsequence of $(\xi_k)$ satisfies that $\xi_k\in\overline{\Omega}_k$, we set
$$\mathbb{H}:=\{x\in\mathbb{R}^{N}:\eta\cdot x > -d\}.$$
Since $R_k \to \infty$, every compact subset in $\mathbb{R}^{N}\smallsetminus\overline{\mathbb{H}}$ is contained in $\mathbb{R}^{N}\smallsetminus\widetilde{\Omega}_k$ for $k$ large enough and, as $v_k\equiv 0$ in $\mathbb{R}^{N}\smallsetminus\widetilde{\Omega}_k$, we have that $v\in D_0^{1,2}(\mathbb{H})$. Moreover, every compact subset of $\mathbb{H}$ is contained in $\widetilde{\Omega}_k$ for $k$ large enough. So, if $\varphi\in\mathcal{C}_c^{\infty}(\mathbb{H})$, then $\mathrm{supp}(\varphi)\subset\widetilde{\Omega}_k$ for $k$ large enough. Passing to the limit in equation (\ref{eq:solution}), using Lemma \ref{lem:bl}$(a)$, we conclude that $J'(v)\varphi=0$ for every $\varphi\in\mathcal{C}_c^{\infty}(\mathbb{H})$. It follows that $v$ is a nontrivial solution of
$$-\Delta v = f(v),\quad v\in D_0^{1,2}(\mathbb{H}),$$
contradicting the fact that this problem has only the trivial solution; see \cite{el,v}. Likewise, if a subsequence of $(\xi_k)$ satisfies that $\xi_k\in\mathbb{R}^{N}\smallsetminus\Omega_k$, we set
$$\mathbb{H}:=\{x\in\mathbb{R}^{N}:\eta\cdot x > d\},$$
and a similar argument yields a contradiction.

This proves that $(d_k)$ is unbounded, and the inequality (\ref{eq:unbdd}) implies that $\xi_k\in\Omega_k$ and that every compact subset of $\mathbb{R}^{N}$ is contained in $\widetilde{\Omega}_k$ for $k$ large enough. So, passing to the limit in equation (\ref{eq:solution}), we conclude that $v$ is a nontrivial solution to the limit problem $(\ref{limprob})$.

Let $\gamma_1,\ldots,\gamma_m \in \Gamma_n$ be such that 
$|\gamma_i \xi_k - \gamma_j \xi_k| \to \infty$ if $i \neq j$. Then, for 
each $j\in\{1,\ldots,m\}$,
$$
v_k\circ\gamma_j^{-1}- \sum \limits_{i=j+1}^{m}v\circ\gamma_i^{-1}(\,\cdot\,-\gamma_i\xi_k + \gamma_j\xi_k) \rightharpoonup v\circ\gamma_j^{-1}
$$
weakly in $D^{1,2}(\mathbb{R}^{N})$, where the sum is defined to be $0$ if $j=m$. Hence,
\begin{align*}
&\left\|v_k\circ\gamma_j^{-1}- \sum \limits_{i=j+1}^{m}v\circ\gamma_i^{-1}(\,\cdot\,-\gamma_i\xi_k + \gamma_j\xi_k)\right\|^2\\
&\qquad = \left\|v_k\circ\gamma_j^{-1}- \sum \limits_{i=j}^{m}v\circ\gamma_i^{-1}(\,\cdot\,-\gamma_i\xi_k + \gamma_j\xi_k)\right\|^2 + \left\|v\circ\gamma_j^{-1}\right\|^2 + o(1).
\end{align*}
Performing the change of variable $x + \gamma_j \xi_k = \tilde{x}$, recalling that $v_{k}(x)=u_{k}(x+\xi_{k})$ and taking into account that $u_k$ is $\Gamma_n$-invariant we get
\begin{align*}
&\left\|u_k - \sum \limits_{i=j+1}^{m}v\circ\gamma_i^{-1}(\,\cdot\,-\gamma_i\xi_k)\right\|^2\\
&\qquad =\left\|u_k - \sum \limits_{i=j}^{m}v\circ\gamma_i^{-1}(\,\cdot\,-\gamma_i\xi_k)\right\|
^2 + \left\|v\right\|^2 + o(1),
\end{align*}
and iterating this identity for $j={1,\dots,m}$ we obtain
\begin{equation}
\label{eq:1}
\left\|u_k\right\|^{2}=\left\|u_k - w_k \right\|^2 + m\left\|v\right\|^2 + o(1),
\end{equation}
where
$$w_k := \sum\limits_{i=1}^{m}v\circ\gamma_i^{-1}(\,\cdot\,-\gamma_i \xi_k).$$
Similarly, using statements $(b)$ and $(c)$ of Lemma \ref{lem:bl} we get that
\begin{equation}
\label{eq:2}
\int_{\mathbb{R}^{N}}F(u_{k}) = \int_{\mathbb{R}^{N}}F(u_k - w_k) + m \int_{\mathbb{R}^{N}}F(v) + o(1)
\end{equation}
and
\begin{equation}
\label{eq:3}
\int_{\mathbb{R}^N}f(u_k)u_k - \int_{\mathbb{R}^N}f(u_k - w_k)[u_k- w_k] = m\int_{\mathbb{R}^N}f(v)v + o(1).
\end{equation}
As $v$ solves the problem $(\ref{limprob})$, property $(iii)$ and equations $(\ref{eq:1})$ and $(\ref{eq:3})$ yield
$$\left\|u_k - w_k \right\|^2 = \int_{\mathbb{R}^N}f(u_k - w_k)[u_k- w_k] + o(1).$$
Note that assumption $(f2)$ implies that $f(s)s-2F(s) \geq 0$ for every $s \in \mathbb{R}$. Therefore,
\begin{align*}
&\frac{1}{2}\left\|u_k - w_k \right\|^2 - \int_{\mathbb{R}^{N}}F(u_k - w_k) + o(1)\\
& \qquad = \int_{\mathbb{R}^N} \left(\frac{1}{2}f(u_k - w_k)[u_k- w_k] -F(u_k - w_k)\right) \geq 0,
\end{align*}
and from property $(ii)$ and equations \eqref{eq:1} and \eqref{eq:2} we get that
\begin{equation}
\label{eq:4}
c = \lim_{k \to \infty} J(u_k) \geq mJ(v)\geq mc_{0}.
\end{equation}
This says that $m$ cannot be arbitrarily large. So, as $|\xi_k| \to \infty$, the only possibility left in Lemma \ref{lem:orbits} is that $n < \infty$, $\xi_k = (\zeta_k,0) \in \mathbb{C} \times \mathbb{R}^{N-2}$ and $|\mathrm{e}^{2\pi\mathrm{i}i/n} \zeta_k - \mathrm{e}^{2\pi\mathrm{i}j/n}\zeta_k| \to \infty$ for $i,j \in \{0,\ldots,n-1\}$  with $i \neq j$.

Then, $v_k(z,y)=u_k(z+\zeta_k,y)$. As $u_{k}$ is $O(N-2)$-invariant, we get that $v_{k}$ is $O(N-2)$-invariant and, since $v_k \to v$ a.e. in $\mathbb{R}^N$, we have that $v$ is also $O(N-2)$-invariant. 

Finally, if we take $m:=n$ and $\gamma_i:= \mathrm{e}^{2\pi\mathrm{i}(i-1)/n}$, $i=1,\ldots,n$, the statements $(\ref{eq:1})$, $(\ref{eq:2})$, $(\ref{eq:3})$ and $(\ref{eq:4})$ complete the proof of the lemma.
\end{proof}


\section{An upper bound for the energy of symmetric minimizers}
\label{sec:upperbound}

Fix $2 \leq n < \infty$ and a positive radial ground state solution $\omega$ to the limit problem (\ref{limprob}). Then, $\omega$ satisfies the decay estimates \eqref{eq:decay}.

Set $\xi_j :=(\mathrm{e}^{2\pi\mathrm{i}j/n},0)$ and, for each $\rho >0$, let
$$\omega_{j,\rho}(x):=\omega(x-\rho \xi_j)\qquad\text{and}
\qquad\sigma_{\rho}:=\sum\limits_{j=0}^{n-1} \omega_{j,\rho}.$$
Fix $R>0$ and a radial cut-off function $\psi=\psi_R \in \mathcal{C}^{\infty}_c(\mathbb{R}^N)$ such that
\begin{equation}
\label{def:psi}
 0 \leq \psi(x) \leq 1, \qquad \psi(x)=0 \text{ if }|x| \leq R, \qquad \psi (x)=1 \text{ if }|x| \geq 2R.
 \end{equation} 
 Then, $\psi\sigma_{\rho} \in D_0^{1,2}(\Omega_R)^{\Gamma_n}$. The aim of this section is to prove the following result.

\begin{proposition}
\label{prop:upperbound}
For every $R>0$ and $2 \leq n < \infty$ there exists $\rho_0 >0$ such that, for each $\rho > \rho_0$,
\begin{enumerate}
\item[$(a)$]there is a unique $t_{\rho}=t_{R,n,\rho} \in (0,\infty)$ such that $t_{\rho}\psi\sigma_{\rho} \in \mathcal{N}_{R}^{\Gamma_n}$,
\item[$(b)$]$c_R^{\Gamma_n} \leq J(t_{\rho}\psi\sigma_{\rho}) < nc_0$.
\end{enumerate}
\end{proposition}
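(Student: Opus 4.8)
Part $(a)$ is the routine part, handled by the fibering map $g(t):=J(t\psi\sigma_\rho)$, $t>0$. Since $\psi\sigma_\rho\neq 0$ and $|F(s)|\le A_1|s|^{2^*}$ with $2^*>2$ by $(f1)$, one has $g(t)=\frac{t^2}2\|\psi\sigma_\rho\|^2+o(t^2)>0$ as $t\to 0^+$, so $g$ increases near $0$; and the superquadraticity in $(f2)$ forces $g(t)\to-\infty$ as $t\to\infty$. Hence $g$ attains a positive maximum at some $t_\rho>0$ with $g'(t_\rho)=0$, i.e. $t_\rho\psi\sigma_\rho\in\mathcal N_R^{\Gamma_n}$. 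Uniqueness is immediate from $(f2)$: writing the Nehari equation as $\|\psi\sigma_\rho\|^2=\int_{\mathbb R^N}\frac{f(t\psi\sigma_\rho)}{t\psi\sigma_\rho}(\psi\sigma_\rho)^2$ and using that $s\mapsto f(s)/s$ is strictly increasing on $(0,\infty)$ (because $f'(s)s>f(s)$), the right-hand side strictly increases in $t$, so $t_\rho$ is unique.

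For $(b)$, the inequality $c_R^{\Gamma_n}\le J(t_\rho\psi\sigma_\rho)$ holds because $t_\rho\psi\sigma_\rho\in\mathcal N_R^{\Gamma_n}$. Since $t_\rho$ maximizes $t\mapsto J(t\psi\sigma_\rho)$, we have $J(t_\rho\psi\sigma_\rho)=\max_{t>0}J(t\psi\sigma_\rho)$, so it suffices to bound this maximum. The plan is to quantify the attractive interaction of the $n$ bumps through
\[
\varepsilon_\rho:=\sum_{i\neq j}\int_{\mathbb R^N}f(\omega_{i,\rho})\,\omega_{j,\rho}>0 .
\]
Using the two-sided decay estimates \eqref{eq:decay} together with $q>2^*$ (which makes $f(\omega)$ integrable, since $(N-2)(q-1)>N$), I would first establish the sharp two-sided bound $\varepsilon_\rho\asymp\rho^{-(N-2)}$: each term concentrates in a unit ball around the center $\rho\xi_i$, where $f(\omega_{i,\rho})=O(1)$ and $\omega_{j,\rho}\approx\omega\big(\rho(\xi_i-\xi_j)\big)\sim(\rho\,|\xi_i-\xi_j|)^{-(N-2)}$.

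With this in hand I would expand the energy of $\sigma_\rho$. Because $-\Delta\omega_{i,\rho}=f(\omega_{i,\rho})$, one has $\langle\omega_{i,\rho},\omega_{j,\rho}\rangle=\int f(\omega_{i,\rho})\omega_{j,\rho}$ exactly, so $\frac12\|\sigma_\rho\|^2=\frac n2\|\omega\|^2+\frac12\varepsilon_\rho$. Splitting $\mathbb R^N$ into Voronoi-type cells $D_i$ about the centers and Taylor-expanding $F(\sigma_\rho)$ on $D_i$ around $\omega_{i,\rho}$, the first-order term reproduces $\sum_{j\neq i}\int f(\omega_{i,\rho})\omega_{j,\rho}$, while the quadratic remainder and the tails $\int_{D_i^c}F(\omega_{i,\rho})$ are $o(\varepsilon_\rho)$ (here $q>2^*$ is used again); thus $\int F(\sigma_\rho)=n\int F(\omega)+\varepsilon_\rho+o(\varepsilon_\rho)$, and consequently $J(\sigma_\rho)=nc_0-\frac12\varepsilon_\rho+o(\varepsilon_\rho)$. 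The cutoff only modifies $\sigma_\rho$ on $B_{2R}$, where $\sigma_\rho\sim\rho^{-(N-2)}$, so $J(\psi\sigma_\rho)-J(\sigma_\rho)=O(\rho^{-2(N-2)})=o(\varepsilon_\rho)$. Finally, since $\|\omega\|^2=\int f(\omega)\omega$ gives $J'(\psi\sigma_\rho)(\psi\sigma_\rho)=O(\varepsilon_\rho)$ while $(f2)$ keeps the second derivative of $t\mapsto J(t\psi\sigma_\rho)$ at $t=1$ bounded away from $0$, the maximizer satisfies $t_\rho=1+O(\varepsilon_\rho)$ and $\max_{t>0}J(t\psi\sigma_\rho)=J(\psi\sigma_\rho)+O(\varepsilon_\rho^2)$. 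Hence $J(t_\rho\psi\sigma_\rho)=nc_0-\frac12\varepsilon_\rho+o(\varepsilon_\rho)<nc_0$ for all $\rho$ large.

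The main obstacle is the interaction analysis, and specifically identifying the leading correction as $-\frac12\varepsilon_\rho$ with the correct negative sign: one must show that the gain coming from the superadditivity of $F$ (equivalently, the coefficient $1$ in front of $\varepsilon_\rho$ in the expansion of $\int F$) strictly beats the kinetic cost $\frac12\varepsilon_\rho$. This is exactly where both inequalities in the decay estimate \eqref{eq:decay} are needed — the lower bound to guarantee $\varepsilon_\rho\gtrsim\rho^{-(N-2)}$ so that $-\frac12\varepsilon_\rho$ genuinely dominates the cutoff, remainder, and scaling errors, and the upper bound together with $q>2^*$ to keep those errors of strictly smaller order. The fibering-map argument in $(a)$ and the cutoff/scaling estimates are routine once these interaction bounds are secured.
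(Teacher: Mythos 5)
Your proposal is correct in strategy and reaches the same conclusion as the paper -- same test functions $t_\rho\psi\sigma_\rho$, same interaction quantity $\varepsilon_\rho\asymp\rho^{-(N-2)}$ (the paper's Lemma \ref{lem:estimates}), and the same leading correction $J(t_\rho\psi\sigma_\rho)=nc_0-\tfrac12\varepsilon_\rho+o(\varepsilon_\rho)$ -- but you implement the two delicate steps differently. For the nonlinear term the paper does not Taylor-expand $F(\sigma_\rho)$ on Voronoi cells; it proves a global pointwise superadditivity inequality (Lemma \ref{lem:acp}), $F(\sum a_i)\geq\sum F(a_i)+\sum_{i\neq j}f(a_i)a_j-C\sum_{i\neq j}(a_ia_j)^{1+\nu/2}$, whose error integrates to $o(\varepsilon_\rho)$ by a single application of Lemma \ref{lem:power} with \emph{balanced} exponents; note also that only this one-sided lower bound on $\int F(\sigma_\rho)$ is needed, not the asymptotic equality you claim. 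Your cell-decomposition route does work, but be warned that the crude global bound on the quadratic remainder, $\int\omega_{i,\rho}^{\,q-2}\omega_{j,\rho}^{2}$, is \emph{not} $o(\rho^{-(N-2)})$ when $q$ is close to $2^*$ and $N$ is large (the exponent $(q-2)(N-2)$ can drop below $N-2$); the estimate is rescued only because on the cell $D_i$ one has $|x-\rho\xi_j|\geq\tfrac12\rho|\xi_i-\xi_j|$, hence $\omega_{j,\rho}\leq C\rho^{-(N-2)}$ uniformly there -- this restriction is essential and must be used, not just the decay of $\omega$. For the dependence on $t_\rho$ the paper avoids your second-order expansion of the fibering map: it uses the one-sided bound $J(t_\rho\omega_{j,\rho})\leq J(\omega_{j,\rho})=c_0$ together with Lemma \ref{lem:fhomo}, which controls the homogeneity defect $\int\bigl(t f(\omega_{i,\rho})-f(t\omega_{i,\rho})\bigr)\omega_{j,\rho}$ by $C|t-1|\varepsilon_{i,j,\rho}$, and then only needs the qualitative fact $t_\rho\to1$ (Lemma \ref{lem:trho1}). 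Your route instead requires the quantitative inputs $g'(1)=O(\varepsilon_\rho)$ and $g''\leq-\delta<0$ near $t=1$; both are true (the first follows from the H\"older estimate $\int|f(\sigma_\rho)-f(\omega_{i,\rho})|\,\omega_{i,\rho}\leq C\bigl(\int(\omega_{i,\rho}\omega_{j,\rho})^{2^*/2}\bigr)^{2/2^*}=O(\rho^{-(N-2)})$, the second from $\int\bigl(f'(\omega)\omega^2-f(\omega)\omega\bigr)>0$ via $(f2)$), but neither is free, so if you follow your plan you must supply them. Part $(a)$ and the cutoff estimates match the paper's Lemmas \ref{lem:nehari} and \ref{lem:cutoff}.
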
 \vspace{6pt}

We start with some lemmas. 

\begin{lemma} \label{lem:power}
\begin{enumerate}
\item[$(a)$] If $y_{0},y\in\mathbb{R}^{N},$ $y_{0}\neq y,$ and $\alpha$ and
$\beta$ are positive constants such that $\alpha+\beta>N$, then there exists
$C_{1}=C_{1}(\alpha,\beta,\lvert y-y_{0}\rvert)>0$ such that
\[
\int_{\mathbb{R}^{N}}\frac{\mathrm{d}x}{(1+|x-\rho y_{0}|)^{\alpha}%
(1+|x-\rho y|)^{\beta}}\leq C_{1}\rho^{-\mu}
\]
for all $R\geq1,$ where $\mu:=\min\{\alpha,\beta,\alpha+\beta-N\}$.

\item[$(b)$] If $y_{0},y\in\mathbb{R}^{N}\smallsetminus\{0\},$ and $\kappa$ and
$\vartheta$ are positive constants such that $\kappa+2\vartheta>N$, then there
exists $C_{2}=C_{2}(\kappa,\vartheta,\lvert y_{0}\rvert,\lvert y\rvert)>0$ such
that
\[
\int_{\mathbb{R}^{N}}\frac{\mathrm{d}x}{(1+|x|)^{\kappa}(1+|x-\rho y_{0}%
|)^{\vartheta}(1+|x-\rho y|)^{\vartheta}}\leq C_{2}\rho^{-\tau},
\]
for all $R\geq1,$ where $\tau:=\min\{\kappa,2\vartheta,\kappa+2\vartheta-N\}$.
\end{enumerate}
\end{lemma}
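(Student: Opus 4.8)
The plan is to prove both parts by the same device: decompose $\mathbb{R}^N$ into a ball around each of the two (resp.\ three) concentration centers together with a complementary far-field region, and estimate the integral on each piece separately. In each near-center ball the factors associated with the \emph{other} centers are bounded below by a multiple of $\rho$ (since the centers are pairwise separated by a distance of order $\rho$), and are therefore bounded above by a negative power of $\rho$; what remains is a single self-integral $\int_{|z|\le c\rho}(1+|z|)^{-\gamma}\,\mathrm{d}z$, which is $O(1)$ when $\gamma>N$ and $O(\rho^{N-\gamma})$ when $\gamma<N$. This dichotomy is exactly what produces the minimum defining $\mu$ (resp.\ $\tau$). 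Throughout I read the hypothesis ``for all $R\ge1$'' as ``for all $\rho\ge1$'', since the integrand depends on $\rho$ and not on $R$; small $\rho$ is harmless because the integral stays bounded while $\rho^{-\mu}$ blows up.

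\textbf{Part $(a)$.} Set $d:=|y_0-y|>0$, so the centers $\rho y_0,\rho y$ lie at distance $\rho d$. On the ball $B_{\rho d/2}(\rho y_0)$ the triangle inequality gives $|x-\rho y|\ge\rho d/2$, hence $(1+|x-\rho y|)^{-\beta}\le C\rho^{-\beta}$, and
\[
\int_{B_{\rho d/2}(\rho y_0)}\frac{\mathrm{d}x}{(1+|x-\rho y_0|)^{\alpha}(1+|x-\rho y|)^{\beta}}\le C\rho^{-\beta}\int_{|z|\le\rho d/2}\frac{\mathrm{d}z}{(1+|z|)^{\alpha}}.
\]
The last integral is $O(1)$ if $\alpha>N$, giving $C\rho^{-\beta}$, and $O(\rho^{N-\alpha})$ if $\alpha<N$, giving $C\rho^{-(\alpha+\beta-N)}$; either way the bound is $\le C\rho^{-\mu}$. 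The ball $B_{\rho d/2}(\rho y)$ is treated symmetrically, producing $\le C\rho^{-\mu}$. On the far region, split along the Voronoi boundary: where $|x-\rho y_0|\le|x-\rho y|$ one has $(1+|x-\rho y|)^{-\beta}\le(1+|x-\rho y_0|)^{-\beta}$, so the integrand is dominated by $(1+|x-\rho y_0|)^{-(\alpha+\beta)}$ on the set $\{|x-\rho y_0|\ge\rho d/2\}$; since $\alpha+\beta>N$ this integrates to $\le C\rho^{-(\alpha+\beta-N)}\le C\rho^{-\mu}$, and the other half is symmetric. Summing the three contributions yields $(a)$.

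\textbf{Part $(b)$.} If $y_0=y$ the two moving factors coalesce into $(1+|x-\rho y_0|)^{-2\vartheta}$ and the claim is exactly $(a)$ with $(\alpha,\beta)=(\kappa,2\vartheta)$. If $y_0\ne y$, the three centers $0,\rho y_0,\rho y$ are pairwise separated by a distance of order $\rho r$, where $r:=\min\{|y_0|,|y|,|y_0-y|\}>0$, so the balls of radius $\rho r/2$ about them are disjoint. Inside $B_{\rho r/2}(0)$ the two moving factors are $\le C\rho^{-2\vartheta}$ and the residual self-integral of $(1+|x|)^{-\kappa}$ gives $\le C\rho^{-2\vartheta}\max\{1,\rho^{N-\kappa}\}\le C\rho^{-\tau}$; inside each $B_{\rho r/2}(\rho y_i)$ the factor at $0$ contributes $\le C\rho^{-\kappa}$, one moving factor contributes $\le C\rho^{-\vartheta}$, and the self-integral of the remaining $(1+|z|)^{-\vartheta}$ gives $\le C\rho^{-\kappa-\vartheta}\max\{1,\rho^{N-\vartheta}\}\le C\rho^{-\tau}$. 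On the far region, partition by the nearest center: whichever center $\zeta$ is closest satisfies $(1+|x-\zeta|)$ being the smallest of the three shifted distances, so the integrand is dominated by $(1+|x-\zeta|)^{-(\kappa+2\vartheta)}$ on $\{|x-\zeta|\ge\rho r/2\}$, which integrates to $\le C\rho^{-(\kappa+2\vartheta-N)}\le C\rho^{-\tau}$ because $\kappa+2\vartheta>N$. Adding the contributions proves $(b)$.

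The one delicate point is the borderline behavior of the near-center self-integrals: when the relevant exponent equals $N$ exactly, $\int_{|z|\le c\rho}(1+|z|)^{-N}\,\mathrm{d}z\sim C\log\rho$, so a pure power bound would pick up a logarithmic factor. The strict inequalities $\alpha+\beta>N$ and $\kappa+2\vartheta>N$ guarantee convergence (and the correct power) of the far-field integrals, and in the applications to the decaying solution $\omega$ the individual exponents stay away from the critical value $N$, so this borderline case does not arise; this is the step one must state carefully to keep the estimate in the clean form $C\rho^{-\mu}$ (resp.\ $C\rho^{-\tau}$).
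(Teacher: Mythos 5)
Your argument is correct, and it is worth noting that the paper gives no proof of this lemma at all: it simply cites \cite[Lemma 4.1]{cm}, so your near/far decomposition is a self-contained replacement rather than a variant of the paper's argument. The scheme --- balls of radius comparable to $\rho$ around the pairwise $\rho$-separated centers, on which the ``other'' factors are frozen at size $\rho^{-\gamma}$ and the remaining self-integral is evaluated, plus a Voronoi-split far field controlled by $\alpha+\beta>N$ (resp.\ $\kappa+2\vartheta>N$) --- is the standard proof of such interaction estimates, and each individual bound you write checks out. Two remarks. First, your caveat about borderline exponents is exactly right, and it is a defect of the \emph{statement} (inherited verbatim from \cite{cm}) rather than of your proof: if, say, $\alpha=N$ and $\beta<N$, the contribution of $B_{\rho d/2}(\rho y_0)$ is genuinely comparable to $\rho^{-\beta}\log\rho$, which is not $O(\rho^{-\mu})=O(\rho^{-\beta})$, so the clean power bound fails there; in every application made in this paper the exponents are $(1+\frac{\nu}{2})(N-2)$, $N+2$ and $N-2$, and even in the one place where an exponent could hit $N$ the extra logarithm is harmless for the conclusion $o(\varepsilon_\rho)$, so nothing is lost. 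Second, in part $(b)$ your three-center decomposition produces a constant depending on $r=\min\{|y_0|,|y|,|y_0-y|\}$, hence degenerating as $y\to y_0$, whereas the statement asserts $C_2=C_2(\kappa,\vartheta,|y_0|,|y|)$; this small discrepancy disappears if one instead deduces $(b)$ directly from $(a)$ via
\[
(1+|x-\rho y_0|)^{-\vartheta}(1+|x-\rho y|)^{-\vartheta}\leq\tfrac12\left[(1+|x-\rho y_0|)^{-2\vartheta}+(1+|x-\rho y|)^{-2\vartheta}\right],
\]
which yields exactly the exponent $\tau=\min\{\kappa,2\vartheta,\kappa+2\vartheta-N\}$ with a constant independent of $|y_0-y|$ and also absorbs the case $y_0=y$ without a separate discussion. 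You are also right that ``for all $R\geq1$'' in the statement is a typo for ``for all $\rho\geq1$''.
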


\begin{proof}
See \cite[Lemma 4.1]{cm}.
\end{proof}

For $i,j\in\{0,\ldots,n-1\}$, $\rho>0$, define
\begin{align*}
\varepsilon_{i,j,\rho}&:=\int_{\mathbb{R}^N}\nabla \omega_{i,\rho}\cdot\nabla \omega_{j,\rho} = \int_{\mathbb{R}^N}f(\omega_{i,\rho})\omega_{j,\rho},\\
\varepsilon_{\rho}&:=\sum\limits_{i \neq j}\varepsilon_{i,j,\rho}.
\end{align*}

\begin{lemma}
\label{lem:estimates}
\begin{itemize}
\item[$(a)$]There are positive constants $C_1$ and $C_2$ such that, 
for every $i\neq j$ and $\rho$ large enough,
$$C_1\rho^{-(N-2)}\leq \varepsilon_{i,j,\rho} \leq C_2\rho^{-(N-2)}.$$
Hence, $\varepsilon_{\rho}\to 0$ 
as $\rho \to\infty$.
\item[$(b)$]There exists $C_0>0$ such that
$$\int_{\mathbb{R}^N}f(s\omega_{i,\rho})t\omega_{j,\rho}\geq C_0 
\rho^{-(N-2)}$$
for every $s,t\geq\frac{1}{2}$ and $\rho$ large enough.
\item[$(c)$]If $\nu>0$ and $i\neq j$ then, as $\rho\to\infty$,
$$\int_{\mathbb{R}^{N}}(\omega_{i,\rho}\,\omega_{j,\rho})^{1+\frac{\nu}{2}} = o(\varepsilon_{\rho}).$$
\item[$(d)$]For every $r>1$ and every compact subset $K$ of $
\mathbb{R}^N$, we have that
$$\int_K |\sigma_{\rho}|^r = o(\varepsilon_{\rho})\qquad\text{and}
\qquad\int_K |\nabla\sigma_{\rho}|^r = o(\varepsilon_{\rho}).$$
\end{itemize}
\end{lemma}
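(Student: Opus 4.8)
The plan is to reduce every estimate to the explicit decay bounds \eqref{eq:decay} for $\omega$ together with the interaction estimate of Lemma \ref{lem:power}(a), exploiting that the centers $\rho\xi_0,\dots,\rho\xi_{n-1}$ are pairwise separated by a distance of exact order $\rho$, since $|\xi_i-\xi_j|=|\mathrm{e}^{2\pi\mathrm{i}i/n}-\mathrm{e}^{2\pi\mathrm{i}j/n}|>0$ is a fixed positive constant for $i\neq j$.

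For part $(a)$, the identity $\varepsilon_{i,j,\rho}=\int\nabla\omega_{i,\rho}\cdot\nabla\omega_{j,\rho}=\int f(\omega_{i,\rho})\omega_{j,\rho}$ follows by testing $-\Delta\omega_{i,\rho}=f(\omega_{i,\rho})$ against $\omega_{j,\rho}$. For the upper bound I would use the global growth $|f(s)|\leq A_1|s|^{2^*-1}$ from $(f1)$ together with \eqref{eq:decay} to get $f(\omega_{i,\rho})\omega_{j,\rho}\leq C(1+|x-\rho\xi_i|)^{-(N+2)}(1+|x-\rho\xi_j|)^{-(N-2)}$, and then apply Lemma \ref{lem:power}(a) with $\alpha=N+2$, $\beta=N-2$, for which $\mu=\min\{N+2,N-2,N\}=N-2$, giving $\varepsilon_{i,j,\rho}\leq C_2\rho^{-(N-2)}$. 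For the lower bound, since $f>0$ on $(0,\infty)$ and $\omega>0$, I would restrict the integral to the unit ball $B_1(\rho\xi_i)$: there $\omega_{i,\rho}$ ranges in a fixed compact subinterval of $(0,\infty)$, so $f(\omega_{i,\rho})$ is bounded below by a positive constant, while $|x-\rho\xi_j|\leq 1+\rho|\xi_i-\xi_j|$ forces $\omega_{j,\rho}\geq C\rho^{-(N-2)}$; integrating over $B_1$ yields $\varepsilon_{i,j,\rho}\geq C_1\rho^{-(N-2)}$. Summing over the finitely many pairs gives $\varepsilon_\rho\asymp\rho^{-(N-2)}\to0$. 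Part $(b)$ is a refinement of the same lower bound: on $B_1(\rho\xi_i)$ we have $\omega_{i,\rho}\geq c_0:=A_2 2^{-(N-2)}$, hence $s\omega_{i,\rho}\geq\tfrac12 c_0$ for $s\geq\tfrac12$, and here I would invoke $(f2)$, which makes $s\mapsto f(s)/s$ nondecreasing on $(0,\infty)$, to conclude $f(s\omega_{i,\rho})\geq f(\tfrac12 c_0)>0$ uniformly in $s\geq\tfrac12$; since $t\omega_{j,\rho}\geq\tfrac12\omega_{j,\rho}\geq C\rho^{-(N-2)}$ on the same ball, the bound $\int f(s\omega_{i,\rho})t\omega_{j,\rho}\geq C_0\rho^{-(N-2)}$ follows at once.

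For part $(c)$ I would bound $(\omega_{i,\rho}\omega_{j,\rho})^{1+\nu/2}$ via \eqref{eq:decay} by $C[(1+|x-\rho\xi_i|)(1+|x-\rho\xi_j|)]^{-(N-2)(1+\nu/2)}$ and apply Lemma \ref{lem:power}(a) with $\alpha=\beta=(N-2)(1+\nu/2)$; recalling from $(a)$ that $\varepsilon_\rho\asymp\rho^{-(N-2)}$, the claim amounts to checking that the resulting exponent $\mu=\min\{(N-2)(1+\tfrac{\nu}{2}),\,N-4+(N-2)\nu\}$ is strictly larger than $N-2$. This is the main obstacle: the first entry always exceeds $N-2$, but the second does so only once $\nu$ is large enough, and this is precisely where one must use that the relevant value of $\nu$, dictated by the supercritical-near-zero exponent $q>2^*$ in $(f1)$ so that $1+\nu/2$ is comparable to $q/2$, makes $N-4+(N-2)\nu>N-2$. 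Finally, part $(d)$ is immediate: on the fixed compact set $K$ the centers escape, $\mathrm{dist}(K,\rho\xi_j)\geq\rho-C\to\infty$, so \eqref{eq:decay} gives the uniform pointwise bounds $|\sigma_\rho|\leq C\rho^{-(N-2)}$ and $|\nabla\sigma_\rho|\leq C\rho^{-(N-1)}$ on $K$; raising to the power $r>1$ and integrating over $K$ produces quantities of order $\rho^{-r(N-2)}$ and $\rho^{-r(N-1)}$, both of which are $o(\rho^{-(N-2)})=o(\varepsilon_\rho)$ since $r>1$.
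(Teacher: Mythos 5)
Your argument follows the same route as the paper's: for $(a)$ and $(b)$ the paper simply cites \cite{cm}, and the proofs you supply (the upper bound via $|f(s)|\le A_1|s|^{2^*-1}$, \eqref{eq:decay} and Lemma~\ref{lem:power}(a) with $\alpha=N+2$, $\beta=N-2$, $\mu=N-2$; the lower bound by restricting to $B_1(\rho\xi_i)$ and using positivity and monotonicity of $f$, which indeed follow from $(f2)$) are exactly the standard arguments behind those citations; part $(d)$ is identical to the paper's.

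The one substantive point is $(c)$, and here your bookkeeping is more honest than the paper's. With $\alpha=\beta=(N-2)(1+\tfrac{\nu}{2})$ one gets $\mu=\min\{(N-2)(1+\tfrac{\nu}{2}),\,N-4+(N-2)\nu\}$, and, as you observe, $\mu>N-2$ holds only when $\nu>\tfrac{2}{N-2}$. The paper's proof just asserts ``for some $\mu>N-2$'', and the statement of $(c)$, read literally for every $\nu>0$, is not merely unproved but false for $0<\nu\le\tfrac{2}{N-2}$: using the lower bound in \eqref{eq:decay}, the contribution of the region $|x-\rho\xi_i|\le\rho/2$ is of exact order $\rho^{-(N-4+(N-2)\nu)}$ (here $\alpha<N$), which is not $o(\rho^{-(N-2)})=o(\varepsilon_\rho)$. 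Your resolution is the right one: $(c)$ is used only in the proof of Proposition~\ref{prop:upperbound}, where $\nu$ is a free parameter coming from Lemma~\ref{lem:acp} with $\nu\in(0,q-2)$, and since $q>2^*$ gives $q-2>\tfrac{4}{N-2}$ one may simply fix $\nu\in(\tfrac{2}{N-2},q-2)$. So the lemma should carry the hypothesis $\nu>\tfrac{2}{N-2}$ rather than $\nu>0$; with that amendment your proof is complete and the rest of the paper is unaffected.
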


\begin{proof}
The first inequality in statement $(a)$ is a special case of $(b)$. The second one is proved in \cite[Lemma 4.2]{cm}. Statement $(b)$ is proved in \cite[Lemma 4.3]{cm}.

$(c):$ Lemma \ref{lem:power} with $\alpha=\beta=(1+\frac{\nu}{2})(N-2)$ and \eqref{eq:decay} imply that, for some $\mu>N-2$,
\begin{equation*}
\int_{\mathbb{R}^{N}}(\omega_{i,\rho}\,\omega_{j,\rho})^{1+\frac{\nu}{2}} \leq C\rho^{-\mu}=o(\varepsilon_\rho).
\end{equation*}

$(d):$ Since $K$ is compact, there exists $C_{K}>0$ such that $|x - \rho\xi_i| \leq \rho + C_{K}$ for all $x\in K$. So from the decay estimates \eqref{eq:decay} we obtain that
$$\int_K \omega_{i,\rho}^r \leq C\int_K |x - \rho\xi_i|^{-r(N-2)}\mathrm{d}x \leq C \rho^{-r(N-2)},$$
for $\rho$ large enough. As $r>1$, statement $(a)$ yields
$$\frac{1}{\varepsilon_{\rho}} \int_K \omega_{i,\rho}^r \leq C\rho^{-(r-1)(N-2)} \to 0 \qquad\text{ as }\rho \to \infty.
$$
Therefore, 
$$\int_K \sigma_{\rho}^r \leq C\sum_{i=0}^{n-1}\int_K \omega_{i,\rho}^r = o(\varepsilon_{\rho}),$$
as claimed. The other estimate is obtained similarly.
\end{proof}

\begin{lemma} \label{lem:cutoff}
For every $t \in (0,\infty)$ we have that
\begin{align}
\label{eq:psi1}
\|\psi\sigma_{\rho}\|^2 &= \|\sigma_{\rho}\|^2 + o(\varepsilon_{\rho}),\\ \label{eq:psi2}
\int_{\mathbb{R}^N}F(t\psi\sigma_{\rho}) &= \int_{\mathbb{R}^N}F(t\sigma_{\rho}) + t^{2^*}o(\varepsilon_{\rho}),\\ \label{eq:psi3}
\int_{\mathbb{R}^N}f(t\psi\sigma_{\rho})[\psi\sigma_{\rho}] &= \int_{\mathbb{R}^N}f(t\sigma_{\rho})\sigma_{\rho} + t^{2^*}o(\varepsilon_{\rho}).
\end{align}
\end{lemma}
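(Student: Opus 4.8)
The whole point is that $\psi_R$ fails to equal $1$ only on the fixed compact set $K:=\overline{B_{2R}(0)}$ (indeed $1-\psi$ is supported in $\{|x|\le 2R\}$ and $\nabla\psi$ in the annulus $\{R\le|x|\le 2R\}$), whereas the bumps making up $\sigma_{\rho}$ are centered at the points $\rho\xi_j$, which escape to infinity as $\rho\to\infty$. Consequently every ``defect'' produced by inserting the cutoff is an integral over $K$, and on such a compact set Lemma \ref{lem:estimates}$(d)$ supplies the decay $\int_K|\sigma_{\rho}|^r=o(\varepsilon_{\rho})$ and $\int_K|\nabla\sigma_{\rho}|^r=o(\varepsilon_{\rho})$ for every $r>1$. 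The plan is to reduce each of the three identities to such integrals and invoke $(d)$ with a suitable exponent.

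For \eqref{eq:psi1} I would write $\nabla(\psi\sigma_{\rho})=\psi\nabla\sigma_{\rho}+\sigma_{\rho}\nabla\psi$ and expand
\[
\|\psi\sigma_{\rho}\|^2-\|\sigma_{\rho}\|^2
=\int_{\mathbb{R}^N}(\psi^2-1)|\nabla\sigma_{\rho}|^2
+2\int_{\mathbb{R}^N}\psi\,\sigma_{\rho}\,\nabla\psi\cdot\nabla\sigma_{\rho}
+\int_{\mathbb{R}^N}\sigma_{\rho}^2|\nabla\psi|^2 .
\]
Since $\psi^2-1$ and $\nabla\psi$ vanish outside $K$ and $0\le\psi\le1$, the first and third terms are bounded by $\int_K|\nabla\sigma_{\rho}|^2$ and $C\int_K\sigma_{\rho}^2$, both $o(\varepsilon_{\rho})$ by $(d)$ with $r=2$; the middle term is controlled by Cauchy--Schwarz through $(\int_K\sigma_{\rho}^2)^{1/2}(\int_K|\nabla\sigma_{\rho}|^2)^{1/2}=o(\varepsilon_{\rho})$. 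This gives \eqref{eq:psi1}.

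For \eqref{eq:psi2} and \eqref{eq:psi3} the integrand of the difference already vanishes outside $K$, since $\psi\equiv1$ on $\{|x|\ge2R\}$; hence these differences equal the corresponding integrals over $K$, e.g.
\[
\int_{\mathbb{R}^N}\big(F(t\psi\sigma_{\rho})-F(t\sigma_{\rho})\big)
=\int_K\big(F(t\psi\sigma_{\rho})-F(t\sigma_{\rho})\big).
\]
Using the global bounds $|F(s)|\le A_1|s|^{2^*}$ and $|f(s)|\le A_1|s|^{2^*-1}$ from $(f1)$ together with $0\le\psi\le1$, each term in the two differences is dominated by $A_1t^{2^*}|\sigma_{\rho}|^{2^*}$, respectively $A_1 t^{2^*-1}|\sigma_{\rho}|^{2^*}$; integrating over $K$ and applying $(d)$ with $r=2^*$ yields the error $t^{2^*}o(\varepsilon_{\rho})$ (for \eqref{eq:psi3} the natural power $t^{2^*-1}$ is of course dominated by $t^{2^*}$ on any range of $t$ bounded below, which is the range relevant in Proposition \ref{prop:upperbound}).

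I expect no serious obstacle here once Lemma \ref{lem:estimates}$(d)$ is in hand; the lemma is essentially bookkeeping. The one point that must be handled with care is the distinction between $O(\varepsilon_{\rho})$ and $o(\varepsilon_{\rho})$: since $\varepsilon_{\rho}\sim\rho^{-(N-2)}$ while $\int_K|\sigma_{\rho}|^r\le C\rho^{-r(N-2)}$, the gain to a genuine little-$o$ hinges precisely on the strict inequality $r>1$, which does hold for both exponents $r=2$ and $r=2^*$ used above. Keeping the cutoff in $[0,1]$ is what lets me replace the $\psi$-weighted integrands by the unweighted majorant $|\sigma_{\rho}|^{2^*}$ without generating extra terms.
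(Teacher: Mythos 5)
Your proposal is correct and follows essentially the same route as the paper: localize every defect to the compact set where $\psi\neq 1$ and invoke Lemma \ref{lem:estimates}$(d)$ with $r=2$ and $r=2^*$. The only (immaterial) differences are that the paper integrates by parts to write the gradient error as $\int(\psi^2-1)|\nabla u|^2-\int(\psi\Delta\psi)u^2$ where you keep the cross term and use Cauchy--Schwarz, and it uses the mean value theorem where you use the pointwise bounds from $(f1)$ directly.
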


\begin{proof}
Let $u\in D^{1,2} (\mathbb{R}^N)$. An easy computation shows that
$$\|\psi u\|^2 = \|u\|^2 + \int_{\mathbb{R}^N}(\psi^2-1)|\nabla u|^2 - \int_{\mathbb{R}^N}(\psi\Delta\psi)u^2.$$ 
Setting $u=\sigma_{\rho}$ and applying statement $(d)$ of Lemma \ref{lem:estimates} we obtain \eqref{eq:psi1}. Next, note that
$$\int_{\mathbb{R}^N}\left(F(t\psi u) - F(tu)\right) = \int_{B_{2R}(0)}\left(F(t\psi u) - F(tu)\right).$$
By the mean value theorem and assumption $(f1)$ there exists $s=s(x) \in (0,1)$ such that
\begin{equation*}
|F(t\psi u)-F(tu)| = |f\left([(1-s)\psi +s]tu\right)||(1-\psi)tu|\leq A_0|tu|^{2^*}
\end{equation*}
pointwise. Hence
\begin{align*}
&\int_{B_{2R}(0)}|F(t\psi u) - F(tu)|  \leq A_0\,t^{2^*}\int_{B_{2R}(0)}|u|^{2^*}.
\end{align*}
Setting $u=\sigma_{\rho}$ and applying statement $(d)$ of Lemma \ref{lem:estimates} we obtain \eqref{eq:psi2}. The proof of \eqref{eq:psi3} is obtained in a similar way.
\end{proof}

\begin{lemma}
\label{lem:fhomo}
For every $\tau >1$ there exists $C_{\tau} >0$ such that
$$\left| \int_{\mathbb{R}^N}\left(t f(\omega_{i,\rho}) - f(t\omega_{i,\rho})\right) \omega_{j,\rho} \right| \leq C_{\tau}|t-1|\varepsilon_{i,j,\rho},$$
for all $t\in[0,\tau]$.
\end{lemma}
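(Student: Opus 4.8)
The plan is to combine a one‑variable mean value estimate in the scaling parameter $t$ with the decay estimate \eqref{eq:decay} and the integral bounds already available in Lemmas \ref{lem:power} and \ref{lem:estimates}. First I would fix $x\in\mathbb{R}^N$, write $s:=\omega_{i,\rho}(x)\ge0$, and use that $t\mapsto tf(s)-f(ts)$ vanishes at $t=1$ and is $\mathcal{C}^1$ in $t$, with derivative $f(s)-sf'(ts)$. The mean value theorem then gives, for some $\bar t$ between $1$ and $t$,
\[
\bigl|\,tf(s)-f(ts)\,\bigr|=\bigl|f(s)-sf'(\bar t s)\bigr|\,|t-1|\le |t-1|\sup_{\sigma\in[0,\tau]}\bigl|f(s)-sf'(\sigma s)\bigr|,
\]
since $\bar t\in[0,\tau]$ whenever $t\in[0,\tau]$. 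This reduces the statement to a pointwise bound on the supremum.

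Next I would prove that there is a constant $C_\tau>0$, depending only on $\tau,A_1,p,q$, such that
\[
\sup_{\sigma\in[0,\tau]}\bigl|f(s)-sf'(\sigma s)\bigr|\le C_\tau\,s^{q-1}\qquad\text{for every }s\ge0.
\]
By $(f2)$ one has $f'>0$ on $(0,\infty)$ and $f'(0)=0$, so $\bigl|f(s)-sf'(\sigma s)\bigr|\le f(s)+sf'(\sigma s)$. Assumption $(f1)$ with $m=0$ yields $f(s)\le A_1s^{q-1}$ for all $s\ge0$ (directly for $s\le1$, and using $p\le q$ for $s\ge1$), while $(f1)$ with $m=1$ controls $sf'(\sigma s)$ through a routine case distinction according to whether $\sigma s$ and $s$ are smaller or larger than $1$; in the range $\sigma s\ge1$, $s<1$ one also uses that $\sigma s\ge1$ forces $s\ge1/\tau$. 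Each case produces $sf'(\sigma s)\le C_\tau s^{q-1}$.

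With these two steps, integrating against $\omega_{j,\rho}$ gives
\[
\left|\int_{\mathbb{R}^N}\bigl(tf(\omega_{i,\rho})-f(t\omega_{i,\rho})\bigr)\omega_{j,\rho}\right|\le C_\tau\,|t-1|\int_{\mathbb{R}^N}\omega_{i,\rho}^{\,q-1}\,\omega_{j,\rho}.
\]
Then the decay estimate \eqref{eq:decay} bounds the right‑hand integral by
\[
A_3^{q}\int_{\mathbb{R}^N}\frac{\mathrm{d}x}{(1+|x-\rho\xi_i|)^{(q-1)(N-2)}(1+|x-\rho\xi_j|)^{N-2}},
\]
to which I would apply Lemma \ref{lem:power}$(a)$ with $\alpha=(q-1)(N-2)$ and $\beta=N-2$: one has $\alpha+\beta=q(N-2)>N$ and, since $q>2^{\ast}>\tfrac{2(N-1)}{N-2}$, also $\alpha+\beta-N>N-2$, so $\mu=N-2$ and the integral is $\le C\rho^{-(N-2)}$. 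Since Lemma \ref{lem:estimates}$(a)$ gives $\varepsilon_{i,j,\rho}\ge C_1\rho^{-(N-2)}$ for $\rho$ large, the last integral is $\le CC_1^{-1}\varepsilon_{i,j,\rho}$, and the claim follows after relabelling the constant.

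The hard part is the pointwise claim of the second step: the two growth regimes of $f$ force the analysis at the thresholds $s=1$ and $\sigma s=1$, and everything hinges on the observation that the deliberately crude bound $s^{q-1}$ still integrates against $\omega_{j,\rho}$ to the sharp order $\rho^{-(N-2)}$ — precisely the order of the lower bound for $\varepsilon_{i,j,\rho}$ in Lemma \ref{lem:estimates}$(a)$. It is this matching of rates that lets me trade the explicit power $\rho^{-(N-2)}$ for $\varepsilon_{i,j,\rho}$, without ever needing a pointwise lower bound on $f$.
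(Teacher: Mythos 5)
Your argument is correct and follows essentially the same route as the paper: a mean value estimate in $t$ for $h(t)=tf(s)-f(ts)$, a pointwise power bound via $(f1)$, integration against $\omega_{j,\rho}$ using the decay estimates and Lemma \ref{lem:power}$(a)$, and the lower bound $\varepsilon_{i,j,\rho}\geq C_1\rho^{-(N-2)}$ from Lemma \ref{lem:estimates}$(a)$. The only (immaterial) difference is that the paper uses the uniform bounds $|f(s)|\leq A_1|s|^{2^*-1}$, $|f'(s)|\leq A_1|s|^{2^*-2}$ already recorded in Section \ref{sec:limprob} to get the pointwise bound $C|u|^{2^*-1}|t-1|$ directly, avoiding your case analysis at the thresholds $s=1$ and $\sigma s=1$; both exponents give $\mu=N-2$ in Lemma \ref{lem:power}$(a)$.
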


\begin{proof}
Fix $u\in\mathbb{R}$ and consider the function $h(t):=tf(u)-f(tu)$. Assumption $(f1)$ implies that 
$|h'(t)| \leq  C|u|^{2^*-1}$ for all $t\in [0,\tau]$. So, by the mean value theorem,
$$|tf(u)-f(tu)| = |h(t)-h(1)| \leq C|u|^{2^*-1}|t-1|$$
for all $t\in [0,\tau]$. By assumption $(f1),$ we have that $\left\vert f(s)\right\vert \leq
A_{1}\left\vert s\right\vert ^{2^{\ast}-1}$. On the other hand, from the
estimates \eqref{eq:decay} and Lemma \ref{lem:power}(a) we obtain
\begin{align*}
\int_{\mathbb{R}^{N}}\omega_{i,\rho}^{2^{\ast}-1}\,\omega_{j,\rho}  &  \leq
C\int_{\mathbb{R}^{N}}\frac{\mathrm{d}x}{(1+|x-\rho \xi_i|)^{N+2}(1+|x-\rho\xi_j|)^{N-2}} \\
&  \leq C\,\rho^{-(N-2)}.
\end{align*}
Hence, Lemma \ref{lem:estimates} yields
\begin{align*}
\left|\int_{\mathbb{R}^N}\left(t f(\omega_{i,\rho}) - f(t\omega_{i,\rho})\right) \omega_{j,\rho} \right| &\leq C|t-1|\int_{\mathbb{R}^N}|\omega_{i,\rho}|^{2^*-1}\omega_{j,\rho} \\
&\leq C_{\tau}|t-1|\varepsilon_{i,j,\rho},
\end{align*}
as claimed.
\end{proof}

\begin{lemma}
\label{lem:nehari}
There exists $\rho_0 > 0$ such that, for each $\rho > \rho_0$, there is a unique $t_{\rho} \in (0,\infty)$ which satisfies that $t_{\rho}\psi\sigma_{\rho} \in \mathcal{N}_{R}^{\Gamma_n}$, and there exists $t_0>1$ such that $t_{\rho} \in (0,t_0)$ for every $\rho \geq \rho_0$.
\end{lemma}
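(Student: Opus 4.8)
The plan is as follows. Write $w:=\psi\sigma_\rho\in D_0^{1,2}(\Omega_R)^{\Gamma_n}$, which is nonnegative and not identically zero. Since $tw\in\mathcal N_R^{\Gamma_n}$ is equivalent to $J'(tw)(tw)=0$, and for $t>0$ this amounts to $\Theta_\rho(t)=\|w\|^2$, where
$$\Theta_\rho(t):=\frac{1}{t^2}\int_{\mathbb{R}^N}f(tw)(tw)=\int_{\mathbb{R}^N}\frac{f(tw)}{tw}\,w^2,$$
my first task is to analyse the real function $\Theta_\rho$. I would first note that, by $(f2)$, the map $s\mapsto f(s)/s$ is strictly increasing on $(0,\infty)$ (since $f(s)<f'(s)s$), so for each $x$ with $w(x)>0$ the integrand $t\mapsto\frac{f(tw(x))}{tw(x)}w(x)^2$ is strictly increasing; hence $\Theta_\rho$ is strictly increasing. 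By $(f1)$, $f(s)/s\le A_1s^{q-2}\to0$ as $s\to0^+$, while $(f2)$ forces $f(s)/s\to\infty$ as $s\to\infty$; combined with the domination $\frac{f(tw)}{tw}w^2\le f(w)w\le A_1|w|^{2^*}$ valid for $t\in(0,1]$ (which makes the integrals finite, cf. Proposition \ref{prop:bpr}), monotone and dominated convergence give $\Theta_\rho(t)\to0$ as $t\to0^+$ and $\Theta_\rho(t)\to\infty$ as $t\to\infty$. As $\Theta_\rho$ is continuous and strictly increasing from $0$ to $\infty$, the equation $\Theta_\rho(t)=\|w\|^2$ has a unique solution $t_\rho\in(0,\infty)$. (This is the usual Nehari projection, already implicit in the fact that $\mathcal N_R^{\Gamma_n}$ is a natural constraint, see \cite[Lemma 3.2]{cm}.)

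The substantive part is producing $t_0>1$ with $t_\rho<t_0$ for all large $\rho$, i.e. preventing the projection from escaping to $+\infty$ as the peaks travel off to infinity. First I would bound the left-hand side: by \eqref{eq:psi1} and Lemma \ref{lem:estimates}$(a)$,
$$\|\psi\sigma_\rho\|^2=\|\sigma_\rho\|^2+o(\varepsilon_\rho)=n\|\omega\|^2+\varepsilon_\rho+o(\varepsilon_\rho)\longrightarrow n\|\omega\|^2,$$
so that $\|\psi\sigma_\rho\|^2\le M$ for $\rho\ge\rho_1$, with $M:=n\|\omega\|^2+1$. It is tempting to expand $\Theta_\rho$ through \eqref{eq:psi3}, but that introduces error terms of size $t^{2^*}o(\varepsilon_\rho)$, useless unless $t$ is already known bounded---a circularity. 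I would instead localize to a single peak, where the cut-off is inactive. For $\rho\ge 2R+1$ every $x$ in the unit ball $B_0:=B_1(\rho\xi_0)$ has $|x|\ge\rho-1\ge2R$, so $\psi\equiv1$ on $B_0$ and there $\psi\sigma_\rho=\sigma_\rho\ge\omega_{0,\rho}\ge0$. Since $\frac{d}{ds}\big(f(s)s\big)=f(s)+f'(s)s>0$ makes $s\mapsto f(s)s$ increasing, dropping all of space but $B_0$ and translating by $\rho\xi_0$ yields
$$\Theta_\rho(t)\ \ge\ \frac{1}{t^2}\int_{B_0}f(t\omega_{0,\rho})(t\omega_{0,\rho})\ =\ \int_{B_1(0)}\frac{f(t\omega)}{t\omega}\,\omega^2\ =:\ \Lambda(t),$$
where the comparison function $\Lambda$ is independent of $\rho$, strictly increasing, and satisfies $\Lambda(t)\to\infty$ as $t\to\infty$ (by monotone convergence, since $f(s)/s\to\infty$ and $\omega>0$ on $B_1(0)$).

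To finish, fix $t_0>1$ with $\Lambda(t_0)>M$, which is possible because $\Lambda(t)\to\infty$. Evaluating the last inequality at $t=t_\rho$ and invoking the Nehari identity $\Theta_\rho(t_\rho)=\|\psi\sigma_\rho\|^2$ gives $\Lambda(t_\rho)\le\Theta_\rho(t_\rho)=\|\psi\sigma_\rho\|^2\le M<\Lambda(t_0)$ for $\rho\ge\rho_1$; since $\Lambda$ is increasing this forces $t_\rho<t_0$, and one takes $\rho_0:=\max\{\rho_1,2R+1\}$. I expect the genuinely delicate step to be exactly this localization producing the $\rho$-independent blow-up function $\Lambda$: it is what lets the bound survive the escape of the peaks to infinity, and it is the reason for working directly with the Nehari integral rather than with its cut-off expansion \eqref{eq:psi3}.
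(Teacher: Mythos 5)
Your proof is correct, and the uniqueness part coincides with the paper's: both rest on the strict monotonicity of $s\mapsto f(s)/s$ coming from $(f2)$. Where you genuinely diverge is in the uniform bound $t_\rho<t_0$. The paper first fixes $t_0>1$ with $J'(t_0\omega)\omega<0$ (possible because $\omega\in\mathcal{N}_0$ and $t\mapsto J'(t\omega)\omega/t$ is strictly decreasing), expands $J'(t_0\sigma_\rho)\sigma_\rho$ peak by peak, uses that $f$ is increasing and $\sigma_\rho\ge\omega_{i,\rho}$ to discard the interaction term with a favorable sign, and then transfers the resulting uniform negativity to $\psi\sigma_\rho$ via Lemma \ref{lem:cutoff} evaluated at the \emph{fixed} $t_0$ --- which is how the paper sidesteps the $t^{2^*}o(\varepsilon_\rho)$ circularity you rightly flag. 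You instead build a $\rho$-independent coercive minorant $\Lambda$ of the Nehari integral by localizing to a unit ball around a single peak, where $\psi\equiv1$ and $\sigma_\rho\ge\omega_{0,\rho}$, and then sandwich: $\Lambda(t_\rho)\le\Theta_\rho(t_\rho)=\|\psi\sigma_\rho\|^2\le n\|\omega\|^2+1<\Lambda(t_0)$. Both arguments are of comparable length; yours needs less of the interaction machinery (only \eqref{eq:psi1} and Lemma \ref{lem:estimates}$(a)$, and those only for the norm bound), while the paper's stays within the peak-decomposition framework reused throughout Section \ref{sec:upperbound}. The one step you leave implicit is the claim that $(f2)$ forces $f(s)/s\to\infty$ as $s\to\infty$, which drives both $\Theta_\rho(t)\to\infty$ and the coercivity of $\Lambda$: it follows by integrating $\theta F(s)\le f(s)s$ to get $F(s)\ge F(1)s^{\theta}$ for $s\ge1$ with $F(1)>0$ (recall $f>0$ on $(0,\infty)$), whence $f(s)/s\ge\theta F(1)s^{\theta-2}\to\infty$ since $\theta>2$; this is worth spelling out.
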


\begin{proof}
Assumption $(f2)$ implies that the function $\frac{f(t)}{t}$ is strictly increasing in $(0,\infty)$. Hence, for each positive function $u\in D^{1,2}(\mathbb{R}^N)$, the function
$$\frac{J'(tu)u}{t} = \|u\|^2 - \int_{\mathbb{R}^N}\frac{f(tu)}{tu}u^2$$
is strictly decreasing in $t \in (0,\infty)$. Therefore, if there exists $t_u \in (0,\infty)$ such that $J'(t_u u)u = 0$, this number will be unique. Observe that $J'(tu)u > 0$ for $t$ small enough.

We claim that there exist $\rho_1 > 0$, $t_0 > 1$ and $M_0>0$ such that
\begin{equation}
\label{eq:claimNehari}
\frac{J'(t_0\sigma_{\rho})\sigma_{\rho}}{t_0} < -M_0 \qquad \text{for all }\, \rho \geq \rho_1.
\end{equation}
Indeed, as $\omega \in \mathcal{N}_0$, there exist $M >0$ and $t_0 > 1$ such that $\frac{J'(t_0\omega)\omega}{t_0} \leq -M$. Assumption $(f2)$ implies that $f$ is increasing. Hence,
\begin{align*}
\frac{J'(t_0\sigma_{\rho})\sigma_{\rho}}{t_0} &= \|\sigma_{\rho}\|^2 - \frac{1}{t_0}\int_{\mathbb{R}^N}f(t_0\sigma_{\rho})\sigma_{\rho}\\
&= \sum\limits_{i=0}^{n-1}\|\omega_{i,\rho}\|^2 + \varepsilon_{\rho} - \sum\limits_{i=0}^{n-1}\frac{1}{t_0}\int_{\mathbb{R}^N}f(t_0\sigma_{\rho})\omega_{i,\rho}\\
&= \sum\limits_{i=0}^{n-1}\frac{J'(t_0\omega_{i,\rho})\omega_{i,\rho}}{t_0} + \varepsilon_{\rho} - \sum\limits_{i=0}^{n-1}\frac{1}{t_0}\int_{\mathbb{R}^N}\left(f(t_0\sigma_{\rho}) - f(t_0\omega_{i,\rho})\right)\omega_{i,\rho}\\
&< -Mn +o_{\rho}(1),
\end{align*}
which immediately yields \eqref{eq:claimNehari}. 

Lemma \ref{lem:cutoff} implies that there exists $\rho_0 \geq \rho_1$ such that
$$\frac{J'(t_0\psi\sigma_{\rho})[\psi\sigma_{\rho}]}{t_0} = \frac{J'(t_0\sigma_{\rho})\sigma_{\rho}}{t_0} +o(\varepsilon_{\rho}) < -\frac{M_0}{2} \qquad \text{if }\,\rho > \rho_0.$$
Hence, there is a unique $t_{\rho} \in (0,t_0)$ such that $J'(t_{\rho}\psi\sigma_{\rho})[\psi\sigma_{\rho}] = 0$, as claimed.
\end{proof}

\begin{lemma}
\label{lem:trho1}
$t_{\rho} \to 1$ as $\rho \to \infty$.
\end{lemma}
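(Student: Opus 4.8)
The plan is to show that every subsequence of $(t_\rho)$ has a further subsequence converging to $1$. By Lemma \ref{lem:nehari} the family $(t_\rho)_{\rho\geq\rho_0}$ lies in $(0,t_0)$, so any subsequence has a sub-subsequence with $t_\rho\to t_*\in[0,t_0]$. I would first record that $t_*>0$: since $\mathcal{N}_R^{\Gamma_n}\subset\mathcal{N}_0$ by \eqref{eq:cG}, the point $t_\rho\psi\sigma_\rho$ lies in $\mathcal{N}_0$, so \eqref{eq:nehari} gives $t_\rho\|\psi\sigma_\rho\|\geq\varrho$; as $\|\psi\sigma_\rho\|^2=\|\sigma_\rho\|^2+o(\varepsilon_\rho)=n\|\omega\|^2+o(1)$ is bounded (by \eqref{eq:psi1} and Lemma \ref{lem:estimates}(a)), this forces $t_\rho\geq c_1>0$, hence $t_*>0$. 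The defining relation $t_\rho\psi\sigma_\rho\in\mathcal{N}_R^{\Gamma_n}$ reads $t_\rho\|\psi\sigma_\rho\|^2=\int_{\mathbb{R}^N}f(t_\rho\psi\sigma_\rho)[\psi\sigma_\rho]$; applying Lemma \ref{lem:cutoff} with $t=t_\rho$ (the bounds are uniform for $t\in(0,t_0)$, since the $t^{2^*}$ factors stay bounded) I would rewrite this as
\begin{equation*}
t_\rho\|\sigma_\rho\|^2=\int_{\mathbb{R}^N}f(t_\rho\sigma_\rho)\sigma_\rho+o(1).
\end{equation*}

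The heart of the argument is to identify the limit of the right-hand side. Using that $\sigma_\rho$ is invariant under the rotation $(\mathrm{e}^{2\pi\mathrm{i}/n},I)\in\Gamma_n$, which permutes the $\omega_{j,\rho}$ cyclically, and that $\omega$ is radial, a change of variables shows $\int f(t_\rho\sigma_\rho)\omega_{j,\rho}$ is independent of $j$, so $\int f(t_\rho\sigma_\rho)\sigma_\rho=n\int f(t_\rho\sigma_\rho)\omega_{0,\rho}$. Translating by $\rho\xi_0$ and setting $\widetilde{\sigma}_\rho(x):=\sigma_\rho(x+\rho\xi_0)=\omega(x)+\sum_{j\neq0}\omega(x-\rho(\xi_j-\xi_0))$, this equals $n\int f(t_\rho\widetilde{\sigma}_\rho)\,\omega$. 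Since $|\rho(\xi_j-\xi_0)|\to\infty$ for $j\neq0$, the decay estimate \eqref{eq:decay} shows $\widetilde{\sigma}_\rho\to\omega$ uniformly on every fixed ball $B_{\bar R}(0)$, whence $f(t_\rho\widetilde{\sigma}_\rho)\to f(t_*\omega)$ uniformly there. I would then prove $\int f(t_\rho\widetilde{\sigma}_\rho)\,\omega\to\int f(t_*\omega)\,\omega$ by splitting $\mathbb{R}^N=B_{\bar R}(0)\cup B_{\bar R}(0)^{c}$: on the ball the uniform convergence against the fixed integrable weight $\omega\mathbf{1}_{B_{\bar R}}$ handles the limit, while on the complement the growth bound $|f(s)|\leq A_1|s|^{2^*-1}$ from $(f1)$ together with Hölder's inequality bounds the tail by $C\big(\int_{|x|>\bar R}\omega^{2^*}\big)^{1/2^*}$, using that $\int\widetilde{\sigma}_\rho^{2^*}$ is bounded; choosing $\bar R$ large makes this uniformly small in $\rho$. \emph{This tail estimate is the main obstacle}, because $\omega\notin L^1(\mathbb{R}^N)$ rules out a naive dominated-convergence argument and the separated peaks in $\widetilde{\sigma}_\rho$ must be controlled globally rather than just locally.

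Combining the display with $\|\sigma_\rho\|^2\to n\|\omega\|^2$ (Lemma \ref{lem:estimates}(a)) and passing to the limit along the subsequence yields $t_*\|\omega\|^2=\int_{\mathbb{R}^N}f(t_*\omega)\omega$, i.e.\ $J'(t_*\omega)\omega=0$. Since $\omega\in\mathcal{N}_0$, the value $t=1$ solves this equation, and by $(f2)$ the map $t\mapsto t^{-1}J'(t\omega)\omega=\|\omega\|^2-\int_{\mathbb{R}^N}\tfrac{f(t\omega)}{t\omega}\omega^2$ is strictly decreasing on $(0,\infty)$ (exactly as used in Lemma \ref{lem:nehari}), so its unique positive zero is $t=1$. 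As $t_*>0$, this forces $t_*=1$. Since every subsequence of $(t_\rho)$ admits a sub-subsequence converging to $1$, I conclude that $t_\rho\to1$ as $\rho\to\infty$.
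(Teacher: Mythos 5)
Your proof is correct, but it takes a genuinely different route from the paper's. The paper works at $t=1$: using Lemma \ref{lem:cutoff} it writes $J'(\psi\sigma_\rho)[\psi\sigma_\rho]=\sum_i J'(\omega_{i,\rho})\omega_{i,\rho}+\varepsilon_\rho-\sum_i\int_{\mathbb{R}^N}(f(\sigma_\rho)-f(\omega_{i,\rho}))\omega_{i,\rho}+o(\varepsilon_\rho)$, exploits that each $\omega_{i,\rho}$ solves the limit problem so the first sum vanishes identically, and kills the interaction terms by the mean value theorem, $(f1)$ and Hölder against $\left(\int_{\mathbb{R}^N}(\omega_{i,\rho}\omega_{j,\rho})^{2^*/2}\right)^{2/2^*}\to0$; the conclusion $t_\rho\to1$ then comes from the strict monotonicity of $t\mapsto J'(tu)u/t$ established in Lemma \ref{lem:nehari}. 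You instead extract a convergent subsequence $t_\rho\to t_*$, show $t_*>0$ via \eqref{eq:nehari}, and pass to the limit directly in the identity $t_\rho\|\sigma_\rho\|^2=\int_{\mathbb{R}^N} f(t_\rho\sigma_\rho)\sigma_\rho+o(1)$, identifying the limit of the nonlinear term through the $\mathbb{Z}_n$-symmetry reduction to a single peak, local uniform convergence of the translated sum to $\omega$, and a Hölder tail estimate; monotonicity then pins down $t_*=1$. The paper's computation is shorter because evaluating at $t=1$ makes the leading terms cancel exactly, so only the small cross-terms need estimating; your argument is longer but arguably more robust: it only needs the errors to be $o(1)$ rather than $o(\varepsilon_\rho)$, and the subsequence-plus-limit-equation framework makes the final step more transparent than the paper's rather terse ``this implies $t_\rho\to1$'', which implicitly requires a uniform quantitative version of the monotonicity. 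Both proofs rest on the same two inputs, Lemma \ref{lem:cutoff} and the strict monotonicity coming from $(f2)$, and your tail estimate (Hölder with exponents $2^*$ and $2^*/(2^*-1)$ against the boundedness of $|\sigma_\rho|_{2^*}$) is carried out correctly.
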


\begin{proof}
From Lemma \ref{lem:cutoff} we get that
\begin{align*}
J'(\psi\sigma_{\rho})[\psi\sigma_{\rho}] &= \|\sigma_{\rho}\|^2 - \int_{\mathbb{R}^N}f(\sigma_{\rho})\sigma_{\rho} + o(\varepsilon_{\rho})\\
&= \sum\limits_{i=0}^{n-1}J'(\omega_{i,\rho})\omega_{i,\rho} + \varepsilon_{\rho} - \sum\limits_{i=0}^{n-1}\int_{\mathbb{R}^N}\left(f(\sigma_{\rho}) - f(\omega_{i,\rho})\right)\omega_{i,\rho}\\ 
&=\varepsilon_{\rho} - \sum\limits_{i=0}^{n-1}\int_{\mathbb{R}^N}\left(f(\sigma_{\rho}) - f(\omega_{i,\rho})\right)\omega_{i,\rho}.
\end{align*}
By the mean value theorem and assumption $(f1)$ there exists $s=s(x) \in (0,1)$ such that
\begin{align*}
|f(\sigma_{\rho}) - f(\omega_{i,\rho})| &= |f'((1-s)\sigma_{\rho} + s \omega_{i,\rho})||\sigma_{\rho} - \omega_{i,\rho}|\\
&\leq  C|\sigma_{\rho}|^{2^*-2}\sum\limits_{j \neq i}\omega_{j,\rho}
\end{align*}
pointwise. Hence, using Hölder's inequality and the fact that $|\rho(\xi_j-\xi_i)|\to\infty$ as $\rho\to\infty$, we get that
\begin{align*}
&\int_{\mathbb{R}^N}|f(\sigma_{\rho}) - f(\omega_{i,\rho})|\,\omega_{i,\rho} \leq C\sum\limits_{j \neq i}\int_{\mathbb{R}^N}\sigma_{\rho}^{2^*-2}\omega_{i,\rho}\,\omega_{j,\rho}\\ 
&\qquad\leq C\sum\limits_{j \neq i}|\sigma_{\rho}|_{2^*}^{2^*-2}\left(\int_{\mathbb{R}^N}(\omega_{i,\rho}\,\omega_{j,\rho})^{2^*/2}\right)^{2/2^*}\\
&\qquad\leq C\sum_{j\neq i}\left(\int_{\mathbb{R}^N}\omega^{2^*/2}\left(\,\cdot\,-\rho(\xi_j - \xi_i)\right) \omega^{2^*/2}\right)^{2/2^*} = o_{\rho}(1).
\end{align*}
We conclude that $J'(\psi\sigma_{\rho})[\psi\sigma_{\rho}] = o_{\rho}(1)$. This implies that $t_{\rho} \to 1$ as $\rho \to \infty$, as claimed.
\end{proof}

\begin{lemma}
\label{lem:acp}
Given $r>0$, $m\in\mathbb{N}$ and $\nu\in(0,q-2)$, there exists a constant $C_{r,m,\nu}>0$ such that, for any finite set of numbers $a_1,\ldots,a_m \in (0,r]$,
\begin{equation}
\label{eq:acp}
F\left(\sum_{i=1}^m a_i\right) - \sum_{i=1}^m F(a_i) - \sum_{\substack{i,j=1\\ i\neq j}}^m f(a_i)a_j\geq - C_{r,m,\nu} \sum_{\substack{i,j=1\\ i\neq j}}^m(a_i a_j)^{1+\frac{\nu}{2}}.
\end{equation}
\end{lemma}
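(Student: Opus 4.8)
The plan is to reduce the general inequality to the two–term case $m=2$ by induction, using the splitting $F(\sum_{i=1}^m a_i)=F(A+a_m)$ with $A:=\sum_{i=1}^{m-1}a_i$, and then to prove the two–term estimate directly from the monotonicity of $f$ and the growth bounds in $(f1)$.

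First I would record two consequences of $(f2)$, valid for $s>0$ and hence available since every $a_i\in(0,r]$. From $0\le\theta F(s)\le f(s)s<f'(s)s^2$ one gets $f'>0$, so $f$ is strictly increasing, and one gets that $f(s)/s$ is strictly increasing. The latter yields the superadditivity $f(x+y)\ge f(x)+f(y)$ for $x,y>0$, because $f(x+y)=x\frac{f(x+y)}{x+y}+y\frac{f(x+y)}{x+y}\ge x\frac{f(x)}{x}+y\frac{f(y)}{y}=f(x)+f(y)$. Both facts will be used in the inductive step.

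For the two–term case I may assume, by symmetry, that $b\le a\le r$, and set $E:=F(a+b)-F(a)-F(b)-bf(a)-af(b)$. Monotonicity of $f$ gives $F(a+b)-F(a)=\int_a^{a+b}f(u)\,\mathrm{d}u\ge bf(a)$, whence $E\ge -F(b)-af(b)$. It then remains to show $F(b)+af(b)\le C_{r,\nu}(ab)^{1+\nu/2}$. Using $(f1)$ in the form $f(b)\le A_1 b^{q-1}$ and $F(b)\le A_1 b^q$ for $b\le 1$ (and the bounded $s\ge 1$ estimates when $b>1$), together with $a\ge b$ and $(ab)^{1+\nu/2}\ge b^{2+\nu}$, this reduces to the elementary inequalities $b^{q-1}\le b^{1+\nu}$ and $b^{q}\le b^{2+\nu}$ for $b\le 1$, which hold exactly because $\nu< q-2$. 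This is the only place where the hypothesis $\nu\in(0,q-2)$ enters.

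Finally I would run the induction on $m$. Applying the two–term estimate to $A$ and $a_m$ gives $F(\sum_i a_i)\ge F(A)+F(a_m)+a_m f(A)+A f(a_m)-C(A a_m)^{1+\nu/2}$, and the induction hypothesis expands $F(A)$ into $\sum_{i<m}F(a_i)+\sum_{i,j<m,\,i\neq j}f(a_i)a_j$ up to an admissible error. The term $A f(a_m)=\sum_{j<m}f(a_m)a_j$ supplies exactly the missing cross terms involving $a_m$ as the differentiated argument, while superadditivity gives $a_m f(A)\ge a_m\sum_{i<m}f(a_i)=\sum_{i<m}f(a_i)a_m$, supplying the remaining cross terms in the direction favourable to a lower bound. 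The error $C(A a_m)^{1+\nu/2}$ is absorbed via $(\sum_{i<m}a_i)^{1+\nu/2}\le C_m\sum_{i<m}a_i^{1+\nu/2}$, so that $(A a_m)^{1+\nu/2}\le C_m\sum_{i\neq j}(a_i a_j)^{1+\nu/2}$. I expect the main obstacle to be precisely this cross–term bookkeeping: one must verify that superadditivity pushes the $a_m f(A)$ contribution the right way for a lower bound, and that each error term collapses into $\sum_{i\neq j}(a_i a_j)^{1+\nu/2}$ with a constant depending only on $r,m,\nu$.
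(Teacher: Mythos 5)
Your proof is correct and follows essentially the same route as the paper's: the superadditivity $f(a+b)\geq f(a)+f(b)$ derived from the monotonicity of $f(s)/s$ (a consequence of $(f2)$), followed by induction on $m$ that reduces everything to the two-term estimate, with the error $(Aa_m)^{1+\nu/2}$ absorbed termwise. The only difference is that the paper cites the two-term case from an earlier work, whereas you prove it directly via $F(a+b)-F(a)\geq bf(a)$ and the bound $F(b)+af(b)\leq C_{r,\nu}(ab)^{1+\frac{\nu}{2}}$ for $b\leq a$, which is a valid, self-contained argument using $(f1)$ and $\nu<q-2$.
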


\begin{proof}
First, we claim that, for any $a,b>0$,
\begin{equation}
\label{eq:f}
f(a+b)\geq f(a)+f(b).
\end{equation}
To prove this inequality, observe that we may assume that $a\geq b$. Note that assumption $(f2)$ implies that $\frac{f(t)}{t}$ is increasing in $(0,\infty)$. Then, using $(f2)$ we obtain that
\begin{align*}
f(a+b)- f(a) - f(b) &= \int_a^{a+b}f'(t)\,\mathrm{d}t - f(b) \geq \int_a^{a+b}\frac{f(t)}{t}\,\mathrm{d}t - f(b)\\
&\geq b\left(\frac{f(a)}{a} - \frac{f(b)}{b}\right) \geq 0,
\end{align*}
as claimed.

Now we prove inequality \eqref{eq:acp} by induction on $m$. Hereafter, $C$ will denote some positive constant, not necessarily the same one, which depends only on $r,m,\nu$. 

By \cite[Lemma 4.8]{cm} we have that, for any $a_1,a_2 \in (0,r]$,
\begin{equation}
\label{eq:ew}
F(a_1+a_2)-F(a_1)-F(a_2)-f(a_1)a_2-f(a_1)a_2 \geq - C|a_1a_2|^{1+\frac{\nu}{2}}.
\end{equation}
Let $m\geq 3$ and assume that the inequality \eqref{eq:acp} is true for $m-1$. Then, using the inequalities \eqref{eq:f} and \eqref{eq:ew} we obtain
\begin{align*}
&F\left(\sum_{i=1}^m a_i\right) \geq F\left(\sum_{i=1}^{m-1} a_i\right) + F(a_m) + f\left(\sum_{i=1}^{m-1} a_i\right)a_m \\ 
&\qquad + f(a_m)\left(\sum_{i=1}^{m-1} a_i\right) - C\left(\sum_{i=1}^{m-1} a_i a_m\right)^{1+\frac{\nu}{2}} \\
&\geq \sum_{i=1}^m F(a_i) + \sum_{\substack{i,j=1\\ i\neq j}}^{m-1} f(a_i)a_j + f\left(\sum_{i=1}^{m-1} a_i\right)a_m + f(a_m)\left(\sum_{i=1}^{m-1} a_i\right) \\
&\qquad - C\sum_{\substack{i,j=1\\ i\neq j}}^{m-1} (a_i a_j)^{1+\frac{\nu}{2}} - C\left(\sum_{i=1}^{m-1} a_i a_m\right)^{1+\frac{\nu}{2}} \\
&\geq \sum_{i=1}^m F(a_i) + \sum_{\substack{i,j=1\\ i\neq j}}^m f(a_i)a_j - C\sum_{\substack{i,j=1\\ i\neq j}}^m (a_i a_j)^{1+\frac{\nu}{2}},
\end{align*}
as claimed.
\end{proof}

\begin{proof}[Proof of Proposition \ref{prop:upperbound}] Statement $(a)$ was proved in Lemma \ref{lem:nehari}. The first inequality in statement $(b)$ follows from $(a)$. Next, we prove the second inequality.

From Lemmas \ref{lem:nehari} and \ref{lem:trho1} we have that $t_{\rho}\in [\frac{1}{2},t_0]$ for large enough $\rho$. So, from Lemmas \ref{lem:cutoff}, \ref{lem:acp}, \ref{lem:estimates} and \ref{lem:fhomo} we get that
\begin{align*}
J(t_{\rho}\psi\sigma_{\rho}) &= J(t_{\rho}\sigma_{\rho}) + o(\varepsilon_{\rho})\\
&=\sum_{j=0}^{n-1}J(t_{\rho}\omega_{j,\rho}) + \frac{t_{\rho}^2}{2}\,\sum\limits_{i \neq j}\int_{\mathbb{R}^N}\nabla \omega_{i,\rho}\cdot\nabla \omega_{j,\rho} \\
&\quad + \sum_{j=0}^{n-1}\int_{\R^{N}}F(t_{\rho}\omega_{j,\rho}) - \int_{\R^{N}}F(t_{\rho}\sigma_{\rho}) + o(\varepsilon_{\rho}) \\
&\leq nc_0 + t_{\rho}^2\sum\limits_{i \neq j} \int_{\mathbb{R}^N}f(\omega_{i,\rho})\omega_{j,\rho} - \frac{t_{\rho}^2}{2}\,\varepsilon_{\rho} \\
&\quad -\sum_{i\neq j}\int_{\mathbb{R}^N}f(t_{\rho}\omega_{i,\rho})t_{\rho}\omega_{j,\rho} + C\sum_{i\neq j}\int_{\mathbb{R}^N}t_{\rho}^{2+\nu}(\omega_{i,\rho} \omega_{j,\rho})^{1+\frac{\nu}{2}} + 
o(\varepsilon_{\rho}) \\
& \leq nc_0 + \sum\limits_{i \neq j}\int_{\mathbb{R}^N}\left(t_{\rho} f(\omega_{i,\rho}) 
-f(t_{\rho}\omega_{i,\rho})\right) t_{\rho}\omega_{j,\rho} - \frac{t_{\rho}^2}{2}\,\varepsilon_{\rho} + o(\varepsilon_{\rho}) \\
& \leq nc_0 + C|t_{\rho} -1|\varepsilon_{\rho} - \frac{t_{\rho}^2}{2}\,\varepsilon_{\rho} + o(\varepsilon_{\rho}).
\end{align*}

Lemma \ref{lem:trho1} says that $t_{\rho}\to 1$ as $\rho\to \infty$. This yields the conclusion.
\end{proof}

\section{Multiple positive solutions}
\label{sec:multiplicity}

This section is devoted to the proof of the Theorem \ref{thm:main}.

Fix $R>0$. We write $\nabla J(u)$ for the gradient of the functional $J: D^{1,2}_0(\Omega_R) \to \mathbb{R}$ at $u$ and we write $\nabla_{\mathcal{N}_R^G} J(u)$ for the orthogonal projection of $\nabla J(u)$ onto the tangent space to the Nehari manifold $\mathcal{N}_R^G$ at the point $u$.

A sequence $(u_k)$ will be called a $(PS)_c$-sequence for $J$ on $\mathcal{N}_R^G$ if 
$$u_k \in \mathcal{N}_R^G, \qquad J(u_k) \to c \qquad\text{and}\qquad \left\|\nabla_{\mathcal{N}_R^G} J(u_k)\right\| \to 0.$$ 

\begin{lemma}
\label{lem:ps}
Every $(PS)_c$-sequence $(u_k)$ for $J$ on $\mathcal{N}_R^G$ contains a subsequence which is bounded in $D^{1,2}(\mathbb{R}^N)$ and satisfies $\left\|\nabla J(u_k)\right\| \to 0$.
\end{lemma}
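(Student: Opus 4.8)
The plan is to prove the two standard assertions for a Nehari-constrained $(PS)_c$-sequence: first that $(u_k)$ is bounded in $D^{1,2}(\mathbb{R}^N)$, and second that the full gradient $\nabla J(u_k)$ (not merely its tangential projection) tends to zero. For boundedness, I would exploit the superquadraticity encoded in $(f2)$. Since $u_k\in\mathcal{N}_R^G$ we have $J'(u_k)u_k=0$, i.e.\ $\|u_k\|^2=\int_{\mathbb{R}^N}f(u_k)u_k$. Combining this with $(f2)$, which gives $\theta F(s)\le f(s)s$ with $\theta>2$, I would estimate
\begin{align*}
J(u_k) &= J(u_k)-\tfrac{1}{\theta}J'(u_k)u_k \\
&= \Bigl(\tfrac{1}{2}-\tfrac{1}{\theta}\Bigr)\|u_k\|^2 + \int_{\mathbb{R}^N}\Bigl(\tfrac{1}{\theta}f(u_k)u_k - F(u_k)\Bigr) \ge \Bigl(\tfrac{1}{2}-\tfrac{1}{\theta}\Bigr)\|u_k\|^2.
\end{align*}
Since $J(u_k)\to c$, the left side is bounded, and because $\tfrac12-\tfrac1\theta>0$ this forces $\|u_k\|$ to be bounded. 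Thus a subsequence is bounded in $D^{1,2}(\mathbb{R}^N)$.

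The second, more substantial task is to upgrade the constrained smallness $\|\nabla_{\mathcal{N}_R^G}J(u_k)\|\to0$ to the unconstrained statement $\|\nabla J(u_k)\|\to0$. The standard mechanism is to decompose $\nabla J(u_k)$ into its component tangent to $\mathcal{N}_R^G$ and its component along the normal direction. Writing $G(u):=J'(u)u$ for the constraint function (so $\mathcal{N}_R^G=\{u\ne0:G(u)=0\}$), one has, at a point $u_k$ of the manifold,
$$
\nabla J(u_k) = \nabla_{\mathcal{N}_R^G}J(u_k) + \lambda_k\,\nabla G(u_k)
$$
for a Lagrange-multiplier scalar $\lambda_k$. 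Since the tangential part already goes to zero, it suffices to show $\lambda_k\to0$. Testing the identity against $u_k$ and using $J'(u_k)u_k=0$ yields $\lambda_k\,G'(u_k)u_k = -\langle\nabla_{\mathcal{N}_R^G}J(u_k),u_k\rangle = o(1)\|u_k\|$, which is $o(1)$ by the boundedness just established. The crux is therefore a uniform lower bound $|G'(u_k)u_k|\ge \delta>0$: this is exactly the transversality/non-degeneracy that makes $\mathcal{N}_R^G$ a natural constraint.

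The main obstacle is precisely establishing this uniform lower bound on $G'(u_k)u_k$. Here I would use the strict inequality $f(s)s<f'(s)s^2$ in $(f2)$ together with the uniform separation from zero. A direct computation gives
$$
G'(u)u = 2\|u\|^2 - \int_{\mathbb{R}^N}\bigl(f'(u)u^2 + f(u)u\bigr),
$$
and on the Nehari manifold $\|u\|^2=\int f(u)u$, so
$$
G'(u_k)u_k = \int_{\mathbb{R}^N}\bigl(f(u_k)u_k - f'(u_k)u_k^2\bigr) < 0.
$$
To get this bounded away from zero uniformly, I would combine the pointwise gap furnished by $(f2)$ with the fact, recorded in \eqref{eq:nehari}, that $\|u_k\|\ge\varrho>0$ for all $u_k\in\mathcal{N}_0\supset\mathcal{N}_R^G$; this prevents $u_k$ from collapsing and, via $(f1)$ controlling the growth of $f$ and $f'$, yields a constant $\delta>0$ with $|G'(u_k)u_k|\ge\delta$. (This uniform negativity is essentially the content of the ``natural constraint'' property cited from \cite[Lemma 3.2]{cm}, which may be invoked directly.) With $\lambda_k\to0$ and $\nabla G(u_k)$ bounded (a consequence of boundedness of $(u_k)$ and the growth bounds in $(f1)$), the decomposition above gives $\|\nabla J(u_k)\|\to0$, completing the proof.
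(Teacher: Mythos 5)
Your overall route is the standard one, and it is essentially what the paper's proof encapsulates by citation: the paper invokes \cite[Lemma 3.6]{cm} for boundedness and the argument of \cite[Lemma 3.7]{cm} for the vanishing of the gradient, so your reconstruction of those arguments is in the same spirit. The boundedness step is correct as written: $J'(u_k)u_k=0$ on $\mathcal{N}_R^G$ together with $\theta F(s)\le f(s)s$ gives $J(u_k)\ge(\tfrac12-\tfrac1\theta)\|u_k\|^2$, and $J(u_k)\to c$ forces boundedness (of the whole sequence, in fact).

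There are, however, two points in the second half that need attention. First, the decomposition $\nabla J(u_k)=\nabla_{\mathcal{N}_R^G}J(u_k)+\lambda_k\nabla G(u_k)$ with a \emph{single} scalar $\lambda_k$ is not automatic: $\mathcal{N}_R^G$ is a codimension-one submanifold of the symmetric subspace $D_0^{1,2}(\Omega_R)^G$, which itself has infinite codimension in $D_0^{1,2}(\Omega_R)$, so the orthogonal complement of $T_{u_k}\mathcal{N}_R^G$ in the full space is not spanned by $\nabla G(u_k)$ alone. A priori your argument only controls the gradient of the restriction $J|_{D_0^{1,2}(\Omega_R)^G}$. The missing (and essential) observation --- which is precisely the one piece of actual content in the paper's own proof --- is that $J$ and $G(u):=J'(u)u$ are invariant under the isometric $G$-action, so $\nabla J(u)$ and $\nabla G(u)$ lie in $D_0^{1,2}(\Omega_R)^G$ whenever $u$ does; only then does $\nabla J(u_k)=\nabla_GJ(u_k)$ and the one-multiplier decomposition become legitimate. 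Second, the uniform bound $|G'(u_k)u_k|\ge\delta>0$ is asserted rather than proved: the strict pointwise inequality $f(s)s<f'(s)s^2$ in $(f2)$ yields $G'(u_k)u_k<0$ but carries no quantitative gap, and ``$\|u_k\|\ge\varrho$ prevents collapse'' is not an argument --- the integrand $f'(u_k)u_k^2-f(u_k)u_k$ could in principle be small where $u_k$ is not. Deferring this to \cite[Lemma 3.2]{cm} is acceptable (the paper defers the entire lemma to \cite{cm}), but you should present it as an invoked result rather than as something that follows readily from $(f1)$, $(f2)$ and \eqref{eq:nehari}.
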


\begin{proof}
By \cite[Lemma 3.6]{cm}, $(u_k)$ contains a bounded subsequence. The same argument given to prove \cite[Lemma 3.7]{cm} shows that $(u_k)$ contains a subsequence such that $\|\nabla_GJ(u_k)\|\to 0$, where $\nabla_GJ(u_k)$ is the gradient of the functional $J: D^{1,2}_0(\Omega_R)^G \to \mathbb{R}$ at $u_k$.

Observe that, if $u\in D^{1,2}_0(\Omega_R)^G$, then $\nabla J(u)\in D^{1,2}_0(\Omega_R)^G$. Therefore, $\nabla J(u)=\nabla_G J(u)$, and the proof is complete.
\end{proof}

\begin{theorem}
\label{thm:existence}
Fix $R>0$ and let $G$ be either the group $O(N)$ or one of the groups $\Gamma_n$ with $2 \leq n \leq \infty$. Then the problem $(\ref{prob})$ has a positive $G$-invariant solution $u$ such that
$$c_0 < J(u)=c_R^G < nc_0.$$
\end{theorem}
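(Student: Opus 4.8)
The plan is to obtain the positive $G$-invariant solution by minimizing $J$ on the Nehari manifold $\mathcal{N}_R^G$ and showing the infimum $c_R^G$ is attained at a value lying strictly between $c_0$ and $nc_0$. The upper bound $c_R^G < nc_0$ is already delivered by Proposition \ref{prop:upperbound}(b) when $G=\Gamma_n$ with $n<\infty$; for the remaining cases ($G=O(N)$, or $G=\Gamma_\infty$) one argues either by monotonicity \eqref{eq:cG}--\eqref{eq:cR} or by a similar test-function computation, but in any event the relevant threshold $nc_0$ is finite and strictly larger than $c_0$ by \eqref{eq:cG}. The strict lower bound $c_0 < c_R^G$ will follow from the splitting analysis: the point is that $\mathcal{N}_R^G \subset \mathcal{N}_0$, so $c_R^G \geq c_0$, and equality would force a minimizing sequence to behave like a single translate of the ground state $\omega$, which cannot live in $D_0^{1,2}(\Omega_R)$ because its support concentrates near a point that is pushed to infinity — contradicting the symmetry constraint that forces at least $n$ separated bumps.

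First I would take a minimizing sequence $(u_k)$ for $J$ on $\mathcal{N}_R^G$. By Ekeland's variational principle applied on the $\mathcal{C}^1$-manifold $\mathcal{N}_R^G$, I may assume $(u_k)$ is a $(PS)_{c_R^G}$-sequence, so by Lemma \ref{lem:ps} it is bounded and satisfies $\|\nabla J(u_k)\| \to 0$; passing to a subsequence, $u_k \rightharpoonup u$ weakly in $D^{1,2}(\mathbb{R}^N)$ with $u \in D_0^{1,2}(\Omega_R)^G$ and $u$ a $G$-invariant (hence weak) solution of \eqref{prob}. The decisive step is to show $u \neq 0$, i.e. that no mass escapes. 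Here I invoke the splitting lemma (Lemma \ref{lem:splitting}): if $u_k \rightharpoonup 0$ weakly but not strongly, then since $c_R^G < nc_0$ the conclusion $c_R^G \geq nJ(v) \geq nc_0$ of Lemma \ref{lem:splitting} is violated (the minimal number of separated bumps forced by the $\Gamma_n$-symmetry is $n$), giving a contradiction. Therefore the weak limit $u$ is nontrivial.

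Once $u \neq 0$ is secured, I would show $u_k \to u$ strongly. The standard argument: apply the splitting decomposition to $u_k - u$; the energy and Nehari identities of Lemma \ref{lem:bl} split $J(u_k)$ and $J'(u_k)u_k$ additively, so that $J(u) + (\text{energy of the escaping part}) = c_R^G$ with each piece a nonnegative multiple of ground-state energies. Since $J(u) \geq c_R^G$ (as $u \in \mathcal{N}_R^G$ after a harmless Nehari rescaling, which is the identity in the limit) and the escaping energy is $\geq 0$, the escaping part must vanish, giving strong convergence and $J(u) = c_R^G$. Positivity of $u$ follows because $|u|$ also lies in $\mathcal{N}_R^G$ with the same energy, so we may take $u \geq 0$, and then the strong maximum principle (using $f(s) > 0$ for $s>0$ from $(f2)$) upgrades this to $u > 0$ in $\Omega_R$. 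Finally $c_0 < J(u)$: if $J(u) = c_0$ then $u$ would be a least-energy solution of the limit problem supported in $\Omega_R$, but such a solution is a translate of the radially decreasing $\omega$, whose support is all of $\mathbb{R}^N$ and cannot be contained in the exterior domain $\Omega_R$ — contradiction.

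The main obstacle is the first step of establishing $u \neq 0$, because the functional $J$ is not compact on the unbounded domain and the domains $\Omega_{R}$ themselves could interact with the concentration. This is precisely what the moving-domain splitting lemma is engineered to control: the symmetry group $\Gamma_n$ guarantees that any nonzero weak-limit profile $v$ appears in at least $n$ widely-separated copies (the orbit structure of Lemma \ref{lem:orbits}), forcing the lost energy to be at least $nc_0$, which the upper bound $nc_0$ of Proposition \ref{prop:upperbound} just barely rules out. The delicate point throughout is keeping track of the interplay between the symmetry constraint (which enforces multiplicity of bumps) and the strict energy gap $c_R^G < nc_0$, and verifying that the rescaled minimizer, after the orthogonal projection onto the tangent space in Lemma \ref{lem:ps}, is a genuine free critical point of $J$ so that $u$ solves \eqref{prob} rather than merely a constrained equation.
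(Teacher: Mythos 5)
Your proposal follows essentially the same route as the paper: minimize on $\mathcal{N}_R^G$, upgrade the minimizing sequence to a Palais--Smale sequence via Ekeland's principle and Lemma \ref{lem:ps}, and use the splitting lemma together with the strict bound $c_R^G<nc_0$ from Proposition \ref{prop:upperbound} to rule out loss of compactness; positivity via $|u|$ and the strict lower bound $c_0<J(u)$ are handled as in the paper (your direct argument for the lower bound --- a nonnegative ground state of the limit problem cannot vanish on the complement of $\Omega_R$ --- is exactly the content of the cited result from \cite{km}, so that step is fine). One point needs correcting: for $G=O(N)$ or $G=\Gamma_\infty$ you cannot get the upper bound ``by monotonicity \eqref{eq:cG}--\eqref{eq:cR}'', since \eqref{eq:cG} goes the wrong way (a larger group gives a \emph{larger} level, and indeed Corollary \ref{cor:energyradial} shows $\sup_R c_R^{O(N)}=\infty$), nor is there a finite threshold $nc_0$ in those cases. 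The correct mechanism, which the paper uses, is that the conclusion of Lemma \ref{lem:splitting} forces $n<\infty$; hence $\Gamma_\infty$- and $O(N)$-invariant Palais--Smale sequences can never satisfy its hypotheses, compactness holds at \emph{every} level for those groups, and the bound ``$<nc_0$'' in the statement is simply vacuous there. Your closing remark about ``infinitely many separated copies forcing infinite lost energy'' is the right intuition for this, but the monotonicity claim as written is false and should be replaced by that argument. Finally, the paper runs the splitting lemma once, applied to $v_k=u_k-u$, which yields nontriviality of $u$ and strong convergence simultaneously; your two-step version (first $u\neq 0$, then strong convergence) works but is slightly more roundabout.
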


\begin{proof}
Let $u_k \in \mathcal{N}_R^G$ be such that $J(u_k) \to c_R^G$. Ekeland's variational principle for $\mathcal{C}^1$-manifolds \cite[Theorem 2.1]{c}, together with Lemma \ref{lem:ps}, allows us to assume that $(u_k)$ is bounded in $D_0^{1,2}(\Omega_R)$ and that $\left\|\nabla J(u_k)\right\| \to 0$.

Passing to a subsequence, we have that $u_k\rightharpoonup u$ weakly in $D^{1,2}_0(\Omega_R)$. Set $v_k := u_k - u \in D^{1,2}_0(\Omega_R)^G$. Using Lemma \ref{lem:bl} we obtain that $u$ is a solution to the problem (\ref{prob}), and that
$$
J(u_k) = J(v_k) + J(u) + o(1) \qquad\text{and}\qquad \nabla J(u_k) = \nabla J(v_k) + o(1).
$$
We claim that $v_k \to 0$ strongly in $D^{1,2}_0(\Omega_R)$.

Arguing by contradiction, assume it does not. Set $R_k := R$. Note that $D^{1,2}_0(\Omega_R)^{O(N)} \subset D^{1,2}_0(\Omega_R)^{\Gamma_{\infty}}$. Then, up to a subsequence, $(v_k)$ satisfies the assumptions $(i)$, $(ii)$ and $(iii)$ of Lemma \ref{lem:splitting} for some $c\leq c^{G}_{R}$, as  $(f_{2})$ yields $J(u)\geq 0$. Consequently, $G \neq O(N)$ and $G \neq \Gamma_{\infty}$. Moreover, if $G=\Gamma_n$ with $n<\infty$ then 
$$c_R^G \geq \lim_{k \to \infty}J(v_k) \geq nc_0,$$
contradicting Proposition \ref{prop:upperbound}. 

This proves that $u_k \to u$ strongly in $D^{1,2}_0(\Omega_R)$. Hence, $u \in \mathcal{N}_R^G$ and $J(u) = c_R^G$. It is shown in \cite[Lemma 2.9]{km} that $c_0$ is the ground state energy of the problem without symmetries in an exterior domain, and that it is not attained. From this fact, and Proposition \ref{prop:upperbound}, we derive that $c_0 < J(u)=c_R^G < nc_0$.

Finally, since $|u|$ is also a minimizer of $J$ on $\mathcal{N}_R^G$, the problem $(\ref{prob})$ has a positive least energy $G$-invariant solution, as claimed.
\end{proof}

As we mentioned earlier, the existence of a positive least energy radial solution to the problem $(\ref{prob})$ was proved in \cite{el}. Theorem \ref{thm:main} asserts that this is not the only positive solution if $R$ is large enough. The main step in its proof is the following lemma.

\begin{lemma}
\label{lem:lowerbound}
For each $2 \leq n < \infty$,
$$\sup_{R>0}c_R^{\Gamma_n} = nc_0.$$
Moreover, if $u_{R,n} \in \mathcal{N}_R^{\Gamma_n}$ is such that $u_{R,n}>0$ and $J(u_{R,n})=c_R^{\Gamma_n}$, then there exist $R_k \to \infty$, $\xi_k = (\zeta_k,0) \in \Omega_{R_k}$ and a positive least energy radial solution $\omega$ to the limit problem $(\ref{limprob})$ such that
$$\mathrm{dist}(\xi_k,\partial \Omega_{R_k}) \to \infty$$
and
$$\lim_{k \to \infty} \left\|u_{R_k,n} - \sum\limits_{j=0}^{n-1} \omega\left(\,\cdot\,-\mathrm{e}^{2\pi\mathrm{i}j/n}\xi_k \right)\right\| =0,$$
where $\mathrm{e}^{2\pi\mathrm{i}j/n}\xi_k:=(\mathrm{e}^{2\pi\mathrm{i}j/n}\zeta_k,0)$.
\end{lemma}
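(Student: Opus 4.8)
The plan is to prove both assertions by a single application of the splitting lemma to the family of symmetric minimizers as $R\to\infty$. First I would record the monotonicity and the upper bound: by \eqref{eq:cR} the map $R\mapsto c_R^{\Gamma_n}$ is nondecreasing, and by Proposition \ref{prop:upperbound} it is bounded above by $nc_0$, so $L:=\sup_{R>0}c_R^{\Gamma_n}=\lim_{R\to\infty}c_R^{\Gamma_n}$ exists and $L\leq nc_0$. Everything then reduces to producing the matching lower bound $L\geq nc_0$ together with the concentration profile, and I would obtain both at once.

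Fix any $R_k\to\infty$ and set $u_k:=u_{R_k,n}$. I would verify the hypotheses of Lemma \ref{lem:splitting}. Since $u_k$ is a $\Gamma_n$-invariant minimizer, the principle of symmetric criticality makes it a critical point of $J$ on all of $D_0^{1,2}(\Omega_{R_k})$, so $J'(u_k)\varphi=0$ for every $\varphi\in D_0^{1,2}(\Omega_{R_k})$, which gives $(iii)$ at once. Boundedness of $(u_k)$ follows from the Nehari identity and $(f2)$, and $J(u_k)=c_{R_k}^{\Gamma_n}\to L$ gives $(ii)$. For $(i)$ I would use that $\mathrm{supp}\,u_k\subset\{|x|\geq R_k\}$ with $R_k\to\infty$, so every fixed compact set eventually avoids all the supports and any weak subsequential limit must vanish; non-strong convergence is immediate because $u_k\in\mathcal N_0$ forces $\|u_k\|\geq\varrho$ by \eqref{eq:nehari}.

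Lemma \ref{lem:splitting} then yields $\xi_k=(\zeta_k,0)\in\Omega_{R_k}$ with $\mathrm{dist}(\xi_k,\partial\Omega_{R_k})\to\infty$, a nontrivial nonnegative $O(N-2)$-invariant solution $v$ of \eqref{limprob}, and $L\geq nJ(v)\geq nc_0$. Combined with $L\leq nc_0$ this forces $L=nc_0$ and $J(v)=c_0$, which settles the supremum claim. By the strong maximum principle $v>0$, so $v$ is a positive solution of \eqref{limprob} and hence radially symmetric about some point; its $O(N-2)$-invariance, together with $N\geq 4$, pins this center to lie in $\mathbb{C}\times\{0\}$, say $(z_0,0)$, and $J(v)=c_0$ makes $\omega:=v(\,\cdot\,+(z_0,0))$ a positive \emph{radial} ground state.

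It remains to upgrade weak to strong convergence and to recast $w_k$. The energy identities of Lemma \ref{lem:splitting} give $J(u_k-w_k)=J(u_k)-nJ(v)+o(1)\to 0$, while, exactly as in that lemma's proof, $\|u_k-w_k\|^2=\int_{\mathbb{R}^N}f(u_k-w_k)[u_k-w_k]+o(1)$; substituting and using that $(f2)$ yields $\tfrac12 f(s)s-F(s)\geq(\tfrac12-\tfrac1\theta)f(s)s\geq 0$ forces $\int_{\mathbb{R}^N}f(u_k-w_k)[u_k-w_k]\to 0$, hence $\|u_k-w_k\|\to 0$. Finally, since $\omega$ is radial about the origin, each summand $v(\mathrm{e}^{-2\pi\mathrm{i}j/n}z-\zeta_k,y)$ of $w_k$ equals $\omega(\,\cdot\,-\mathrm{e}^{2\pi\mathrm{i}j/n}\tilde\xi_k)$ with $\tilde\xi_k:=(\zeta_k+z_0,0)$; as $\tilde\xi_k$ differs from $\xi_k$ by the fixed vector $(z_0,0)$, one still has $\tilde\xi_k\in\Omega_{R_k}$ and $\mathrm{dist}(\tilde\xi_k,\partial\Omega_{R_k})\to\infty$, and relabeling $\tilde\xi_k$ as $\xi_k$ delivers the stated limit. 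The main obstacle is precisely the strong-convergence step: it is the sharp matching $L=nc_0=nJ(v)$ that rules out any residual concentration or vanishing in $u_k-w_k$, and the remaining delicate point is the symmetry bookkeeping that turns $w_k$ into a sum of rotated copies of a single radial $\omega$.
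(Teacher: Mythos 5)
Your proof is correct and follows the paper's overall skeleton: monotonicity from \eqref{eq:cR} plus the upper bound of Proposition \ref{prop:upperbound}, verification of hypotheses $(i)$--$(iii)$ of Lemma \ref{lem:splitting} for a sequence of minimizers with $R_k\to\infty$, the sandwich $nc_0\leq nJ(v)\leq c\leq nc_0$, recentering $v$ at its center of radial symmetry $(\zeta_*,0)$ (forced onto $\mathbb{C}\times\{0\}$ by $O(N-2)$-invariance), and finally strong convergence $\|u_k-w_k\|\to 0$. The one place where you genuinely diverge is that last step. The paper argues by contradiction: assuming $\|u_k-w_k\|^2\geq 2a_1>0$, it invokes the vanishing lemma (Lemma \ref{lem:lions}) to produce a nontrivial translated weak limit $\tilde v$, and then uses Fatou's lemma together with $f(s)s-2F(s)>0$ for $s\neq 0$ to reach $0\geq\int(\tfrac12 f(\tilde v)\tilde v-F(\tilde v))>0$. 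You instead observe that $(f2)$ gives the quantitative pointwise bound $\tfrac12 f(s)s-F(s)\geq(\tfrac12-\tfrac1\theta)f(s)s\geq 0$ with $\theta>2$, so that $J(u_k-w_k)=\int\bigl(\tfrac12 f(u_k-w_k)[u_k-w_k]-F(u_k-w_k)\bigr)+o(1)\to 0$ directly forces $\int f(u_k-w_k)[u_k-w_k]\to 0$ and hence $\|u_k-w_k\|^2\to 0$ via the near-Nehari identity. This is a cleaner and more elementary route: it avoids a second appeal to concentration-compactness and to the strict inequality $f(s)s-2F(s)>0$, using only the $\theta$-superquadraticity already assumed. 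Both identities you need ($J(u_k-w_k)=J(u_k)-nJ(v)+o(1)$ and the near-Nehari identity) do follow from the displayed conclusions of Lemma \ref{lem:splitting} combined with $J'(u_k)u_k=0$ and $J'(v)v=0$, so nothing is missing.
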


\begin{proof}
Fix $2 \leq n < \infty$. From the inequalities $(\ref{eq:cR})$ and Proposition \ref{prop:upperbound} we get that
$$0<c_0 \leq c:=\sup_{R>0}c_R^{\Gamma_n} \leq nc_0.$$
Fix a sequence $R_k < R_{k+1}$ with $R_k \to \infty$, and let $u_k := u_{R_k,n} \in \mathcal{N}_{R_k}^{\Gamma_n}$ be such that $u_k>0$ and $J(u_k)=c_{R_k}^{\Gamma_n}$. Then, as stated in \eqref{eq:nehari}, $\|u_k\| \geq \varrho >0$ for all $k$. So $(u_k)$ satisfies the assumptions $(i)$, $(ii)$ and $(iii)$ of Lemma \ref{lem:splitting} and, consequently, there exist $\tilde{\xi}_k=(\tilde{\zeta}_k,0) \in \Omega_{R_k}$ such that $\mathrm{dist}(\tilde{\xi}_k,\partial\Omega_{R_k}) \to \infty$, and a positive $O(N-2)$-invariant solution $v$ to the limit problem $(\ref{limprob})$ such that
$$nc_0 \leq nJ(v) \leq c \leq nc_0.$$
Therefore, $c=nc_0$ and $J(v)=c_0$. By \cite[Corollary 1.2]{v}, $v$ is radially symmetric about some point and, as $v$ is $O(N-2)$-invariant, this point is of the form $(\zeta_{*},0) \in \mathbb{C} \times 
\mathbb{R}^{N-2}$. We set $\omega(z,y):=v(z+\zeta_{*},y)$ and $\zeta_k = \tilde{\zeta}_k + \zeta_{*}$. Then, from Lemma \ref{lem:splitting} we get that
\begin{align}
\label{eq:nullenergy}
J(u_k-w_k) &= \frac{1}{2}\|u_k - w_k\|^2 - \int_{\mathbb{R}^N}F(u_k - w_k) =o(1),\\ \label{eq:nearNehari}
\|u_k - w_k\|^2&=\int_{\mathbb{R}^N}f(u_k - w_k)[u_k- w_k] + o(1),
\end{align}
where
$$w_k(z,y):= \sum\limits_{j=0}^{n-1}v(\mathrm{e}^{-2\pi\mathrm{i}j/n}z - \tilde{\zeta}_k,y) =  \sum\limits_{j=0}^{n-1}\omega(z - \mathrm{e}^{2\pi\mathrm{i}j/n}\zeta_k,y).$$
Next we show that $\|u_k - w_k\|^2 \to 0$.

Set $v_k:= u_k - w_k$. Arguing by contradiction, assume that a subsequence satisfies that $\|v_k\|^2 \geq 2a_1 >0$. Then, from equation (\ref{eq:nearNehari}) and assumption $(f1)$ we obtain 
that
$$0<a_1<\int_{\mathbb{R}^N}f(v_k)v_k \leq A_0|v_k|_{2^*}^{2^*}.$$
By Lemma \ref{lem:lions}, there exist $a_2>0$ and a sequence $(y_k)$ in $\mathbb{R}^{N}$ such that
$$\int_{B_1(y_k)}|v_k|^{2}=\sup_{x\in\mathbb{R}^{N}}\int_{B_1(x)}|
v_k|^{2}\geq a_2>0\quad\text{for all }\,k.$$
Define $\tilde{v}_k(x) := v_k(x + y_k)$. As $(\tilde{v}_k)$ is bounded in $D^{1,2}(\mathbb{R}^N)$, after passing to a subsequence, $\tilde{v}_k \rightharpoonup \tilde{v}$ weakly in $D^{1,2}(\mathbb{R}^N)$, $\tilde{v}_k \to \tilde{v}$ a.e. in $\mathbb{R}^N$ and $\tilde{v}_k \to \tilde{v}$ in $L^2_\mathrm{loc}(\mathbb{R}^N)$. Hence, $\tilde{v} \neq 0$. Assumption $(f2)$ implies that $f(s)s-2F(s) >0$ if $s \neq 0$. So, from equations \eqref{eq:nullenergy} and \eqref{eq:nearNehari} and Fatou's lemma, we obtain that
$$0 = \lim_{k \to \infty} \int_{\mathbb{R}^N}\left(\frac{1}{2}f(v_k)v_k - F(v_k)\right) \geq \int_{\mathbb{R}^N}\left(\frac{1}{2}f(\tilde{v})\tilde{v} - F(\tilde{v})\right) > 0,$$
which is a contradiction. This proves that $\|u_k - w_k\|^2 \to 0$, and finishes the proof of the lemma.
\end{proof}

\begin{proof}
[Proof of Theorem \ref{thm:main}]
Fix $m\in\mathbb{N}$, $m\geq 2$. For each $2 \leq n \leq m$ and $R >0$, Theorem \ref{thm:existence} yields a positive least energy $\Gamma_n$-invariant solution $u_{R,n}$ to the problem \eqref{prob}. By Proposition 
\ref{prop:upperbound} and Lemma \ref{lem:lowerbound}, there exists $\tilde{R}_n >0$ such that
$$(n-1)c_0 < J(u_{R,n}) < nc_0 \qquad\text{for all }\,R > \tilde{R}_n.$$
Setting $R(m):= \max\{\tilde{R}_1,\ldots,\tilde{R}_m\}$ we obtain the first part of the statement. The second part is given by Lemma \ref{lem:lowerbound}.
\end{proof}

Lemma \ref{lem:lowerbound} yields also the 
following result.

\begin{corollary}\label{cor:energyradial}
The ground state energies of the radial and the $\Gamma_{\infty}$-invariant solutions satisfy
$$\sup_{R>0}c_R^{\Gamma_{\infty}} = \sup_{R>0}c_R^{O(N)} = \infty.$$
\end{corollary}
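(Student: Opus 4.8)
The plan is to show $\sup_{R>0}c_R^{\Gamma_\infty}=\infty$, since the inequalities \eqref{eq:cG} give $c_R^{O(N)}\geq c_R^{\Gamma_\infty}$, so the claim for $O(N)$ follows immediately from the claim for $\Gamma_\infty$. I would argue by contradiction: suppose $\sup_{R>0}c_R^{\Gamma_\infty}=c<\infty$. The strategy is to extract a sequence of minimizers for increasing $R_k\to\infty$ and feed them into the splitting lemma (Lemma \ref{lem:splitting}) with $n=\infty$, exploiting the fact that that lemma forces $n<\infty$ whenever its hypotheses are met. This contradiction is exactly where the argument will bite.

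\medskip
\noindent First I would fix an increasing sequence $R_k\to\infty$ and, for each $k$, take a positive $\Gamma_\infty$-invariant minimizer $u_k:=u_{R_k,\infty}$ with $J(u_k)=c_{R_k}^{\Gamma_\infty}$; its existence is guaranteed by Theorem \ref{thm:existence} applied with $G=\Gamma_\infty$. Under the contradiction hypothesis the energies $J(u_k)=c_{R_k}^{\Gamma_\infty}$ are bounded above by $c$, and by \eqref{eq:cR} they are nondecreasing in $R_k$, so $J(u_k)\to c'$ for some $c'\le c$; meanwhile \eqref{eq:nehari} gives $\|u_k\|\geq\varrho>0$. I would then verify that $(u_k)$ satisfies hypotheses $(i)$, $(ii)$, $(iii)$ of Lemma \ref{lem:splitting}: boundedness in $D^{1,2}$ follows from the energy bound together with $(f2)$ (as in the proof of Lemma \ref{lem:lowerbound}); the weak limit is $0$ because $\mathrm{dist}(0,\partial\Omega_{R_k})=R_k\to\infty$, which pushes the mass off to infinity and prevents strong convergence to a nonzero radial limit; property $(iii)$ holds because each $u_k$ is a critical point of $J$ on $\mathcal{N}_{R_k}^{\Gamma_\infty}$, hence a genuine solution on $\Omega_{R_k}$. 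This places us exactly in the setting of Lemma \ref{lem:splitting} with the fixed value $n=\infty$.

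\medskip
\noindent The key observation is that Lemma \ref{lem:splitting} concludes ``$n<\infty$'' as part of its statement, so applying it with $n=\infty$ yields an outright contradiction. Tracing the mechanism inside that lemma makes this transparent: for $n=\infty$, Lemma \ref{lem:orbits} lands in its third alternative, producing for every $m\in\mathbb{N}$ elements $\gamma_1,\dots,\gamma_m\in\Gamma_\infty$ with $|\gamma_i\xi_k-\gamma_j\xi_k|\to\infty$, and the energy-splitting estimate \eqref{eq:4} then forces $c'\geq mJ(v)\geq mc_0$ for \emph{every} $m$, with $c_0>0$. Since $m$ is arbitrary, this makes $c'=\infty$, contradicting $c'\leq c<\infty$. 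Thus the supremum cannot be finite.

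\medskip
\noindent The main obstacle, and the step I would treat most carefully, is checking that $(u_k)$ genuinely meets hypothesis $(i)$ of Lemma \ref{lem:splitting}: I must confirm that $u_k\rightharpoonup 0$ and that convergence is \emph{not} strong. The weak limit being $0$ relies on the growing hole, and the failure of strong convergence is forced by $\|u_k\|\geq\varrho$; if strong convergence held the weak limit would be a nonzero solution with $\|u\|\geq\varrho$, incompatible with $u_k\rightharpoonup 0$. Once $(i)$ is secured, $(ii)$ and $(iii)$ are routine in view of the preceding results, and the contradiction via \eqref{eq:4} closes the argument.
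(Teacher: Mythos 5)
Your argument is correct, but it follows a different route from the paper's. The paper's proof of Corollary \ref{cor:energyradial} is a two-line consequence of results already established: Lemma \ref{lem:lowerbound} gives $\sup_{R>0}c_R^{\Gamma_n}=nc_0$ for every finite $n$, and since each $\Gamma_n$ is a closed subgroup of $\Gamma_{\infty}$, which in turn is a closed subgroup of $O(N)$, the inequalities \eqref{eq:cG} yield $nc_0\leq\sup_{R>0}c_R^{\Gamma_{\infty}}\leq\sup_{R>0}c_R^{O(N)}$ for all $n$, so both suprema are infinite. You instead run a direct contradiction argument that never invokes Lemma \ref{lem:lowerbound}: assuming the supremum finite, you take $\Gamma_{\infty}$-invariant minimizers on $\Omega_{R_k}$ with $R_k\to\infty$, verify hypotheses $(i)$--$(iii)$ of Lemma \ref{lem:splitting} (your verifications are sound: the weak limit is $0$ because $u_k$ vanishes on $B_{R_k}(0)$ and $R_k\to\infty$, strong convergence fails by \eqref{eq:nehari}, and $(iii)$ holds because the minimizers are genuine solutions), and then use the lemma's conclusion ``$n<\infty$'' with $n=\infty$ to reach a contradiction --- which is precisely the mechanism the paper uses inside the proof of Theorem \ref{thm:existence} to rule out splitting for $G=\Gamma_{\infty}$ and $G=O(N)$. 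Your route is self-contained modulo the splitting lemma and exposes the underlying reason ($\Gamma_{\infty}$-orbits escaping to infinity are infinite, so any concentration would cost $mc_0$ for every $m$); the paper's route is shorter and, through Lemma \ref{lem:lowerbound}, additionally identifies the exact value $nc_0$ of each finite-$n$ supremum rather than only unboundedness. Both arguments are valid.
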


\begin{proof}
Indeed, by Lemma \ref{lem:lowerbound} and the inequalities 
\eqref{eq:cG},
$$nc_0 = \sup_{R>0}c_R^{\Gamma_n} \leq \sup_{R>0}c_R^{\Gamma_{\infty}} \leq \sup_{R>0}c_R^{O(N)},$$
for every $2\leq n < \infty$.
\end{proof}

\begin{remark}
Since the energy of the solution obtained in \emph{\cite{km}} is less that $2c_0$, \emph{Corollary} \ref{cor:energyradial} implies that, for $R$ sufficiently large, that solution is different from the radial one if $N\neq3$.
\end{remark}


\section{Some remarks on the 3-dimensional case}
\label{sec:N=3}

If $N=3$ the situation is quite different. For every $2 \leq n \leq \infty$ there are $\Gamma_n$-orbits in $\Omega_R$ which consist of only two points, namely those of the form $\{(0,y),(0,-y)\}$. Therefore, compactness of $\Gamma_n$-invariant $(PS)_c$-sequences is lost already at the level $2c_0$. Lemma \ref{lem:lowerbound} is no longer true in dimension $3$. In fact, one has the following result.

\begin{theorem}
\label{thm:N=3}
Let $N=3$. Fix $R>0$ and $2 \leq n \leq \infty$. Then, the problem $(\ref{prob})$ has a positive $\Gamma_n$-invariant solution $u_{R,n}$ which satisfies
$$c_0 < J(u_{R,n})=c_R^{\Gamma_n} < 2c_0.$$
\end{theorem}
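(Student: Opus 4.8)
The plan is to mirror the minimization scheme used for Theorem \ref{thm:existence}, but with the compactness threshold lowered from $nc_0$ to $2c_0$. The reason, as noted at the start of this section, is that when $N=3$ one has $O(N-2)=O(1)=\{\pm 1\}$ and $\mathbb{S}^{N-3}=\mathbb{S}^0=\{\pm 1\}$, so the smallest nontrivial $\Gamma_n$-orbit escaping to infinity has exactly two elements, namely a pair $\{(0,y),(0,-y)\}$. Accordingly I would first establish the upper bound $c_R^{\Gamma_n}<2c_0$ for every $2\le n\le\infty$, then prove compactness of minimizing sequences below $2c_0$, and finally deduce existence, positivity, and the lower bound.

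For the upper bound I would replace the $n$ bumps placed on a circle in the $z$-plane (used in Proposition \ref{prop:upperbound}) by just two bumps placed on the $y$-axis. Concretely, set $\sigma_\rho(z,y):=\omega(z,y-\rho)+\omega(z,y+\rho)$ and take the test function $\psi\sigma_\rho$, where $\psi=\psi_R$ is the cut-off of \eqref{def:psi}. Since $\omega$ is radial, each summand is invariant under rotations of $z$, while the reflection $y\mapsto -y$ interchanges the two summands; hence $\psi\sigma_\rho\in D_0^{1,2}(\Omega_R)^{\Gamma_\infty}\subset D_0^{1,2}(\Omega_R)^{\Gamma_n}$ for every $n$. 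This is precisely the step that fails for $N\ge 4$, where the $O(N-2)$-orbit of $(0,y)$ is an entire sphere. The interaction estimates of Section \ref{sec:upperbound} (Lemmas \ref{lem:power}--\ref{lem:fhomo}) depend only on the mutual distance $\sim\rho$ of the centers, so they carry over to this two-point configuration with $\varepsilon_\rho\sim\rho^{-(N-2)}$; repeating the computation in the proof of Proposition \ref{prop:upperbound} with $n=2$ yields a projection $t_\rho\psi\sigma_\rho\in\mathcal{N}_R^{\Gamma_n}$ with $J(t_\rho\psi\sigma_\rho)\le 2c_0-\tfrac{1}{2}\varepsilon_\rho+o(\varepsilon_\rho)<2c_0$ for $\rho$ large, hence $c_R^{\Gamma_n}<2c_0$.

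For compactness I would prove the $N=3$ analog of the splitting Lemma \ref{lem:splitting}, showing that any nontrivial splitting costs at least $2c_0$. The only place dimension enters is Lemma \ref{lem:orbits}: with $y_k/|y_k|\to y\in\mathbb{S}^0$ one no longer obtains arbitrarily many separated $O(N-2)$-translates, but one always obtains the two translates $(1,\pm 1)$, whose centers $(z_k,\pm y_k)$ separate once $|y_k|\to\infty$. Tracking the possibilities for a concentration center $\xi_k$ with $|\xi_k|\to\infty$: if its $z$-component is unbounded, the $\mathbb{Z}_n$-action (or $\mathbb{S}^1$-action, which would give infinitely many copies and unbounded energy, hence is excluded for $c<2c_0$) already produces at least $2$ separated copies; if instead only its $y$-component is unbounded, the $O(1)$-flip produces exactly two separated copies. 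In either case the peeling argument of Lemma \ref{lem:splitting}, taken with $m=2$, gives $c\ge 2J(v)\ge 2c_0$, where $v$ is the resulting nontrivial solution of \eqref{limprob}. Thus a $(PS)$-sequence with energy $c<2c_0$ cannot split.

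With these two ingredients the existence argument runs as in Theorem \ref{thm:existence}: take $u_k\in\mathcal{N}_R^{\Gamma_n}$ with $J(u_k)\to c_R^{\Gamma_n}$, use Ekeland's principle and Lemma \ref{lem:ps} to turn it into a bounded $(PS)$-sequence, extract a weak limit $u$ solving \eqref{prob}, and note that $v_k:=u_k-u\rightharpoonup 0$; if $v_k\not\to 0$ strongly, the $N=3$ splitting lemma together with $J(u)\ge 0$ (from $(f2)$) forces $c_R^{\Gamma_n}\ge 2c_0$, contradicting the upper bound. Hence $u_k\to u$, so $c_R^{\Gamma_n}$ is attained; passing to $|u|$ and invoking the maximum principle yields a positive solution, and the strict lower bound $c_R^{\Gamma_n}>c_0$ follows, exactly as in Theorem \ref{thm:existence}, from the non-attainment of $c_0$ in the exterior domain established in \cite{km}. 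I expect the main obstacle to be the careful reformulation of Lemmas \ref{lem:orbits} and \ref{lem:splitting} for $N=3$: one must verify that the degeneration $\mathbb{S}^{N-3}=\mathbb{S}^0$ still guarantees two separating copies in every splitting scenario, which is precisely the quantitative statement matching the sharp threshold $2c_0$.
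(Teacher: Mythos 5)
Your proposal follows essentially the same route as the paper's (sketched) proof: the two-bump test function $\omega(z,y-\rho)+\omega(z,y+\rho)$ cut off by $\psi_R$ to get $c_R^{\Gamma_n}<2c_0$, an adaptation of the splitting lemma showing that every escaping $\Gamma_n$-orbit in dimension $3$ produces at least two separated copies so that compactness holds below $2c_0$, and then the Ekeland/minimization argument of Theorem \ref{thm:existence}. Your discussion of why the degeneration $\mathbb{S}^{N-3}=\mathbb{S}^{0}$ yields exactly the threshold $2c_0$ is the correct key point and matches the paper's reasoning.
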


\begin{proof}
We give a sketch of the proof. For each $\rho >0$, let 
$$\tilde{\sigma}_\rho:= \omega_{\rho} + \omega_{-\rho},$$
where $\omega_r(z,y):=\omega(z,y-r)$ for every $(z,y)\in \mathbb{C}\times\mathbb{R}$ and $r\in\mathbb{R}$. Let $\psi = \psi_R$ be a radial cut-off function as in \eqref{def:psi}. Then, $\psi\tilde{\sigma}_{\rho}$ is $[O(2)\times O(1)]$-invariant and, hence, $\psi\tilde{\sigma}_{\rho}\in D_0^{1.2}(\Omega_R)^{\Gamma_n}$ for every $2 \leq n \leq \infty$.

As in Lemma \ref{lem:nehari} one proves that, for $\rho$ large enough, there exists $t_{\rho}\in (0,\infty)$ such that $t_{\rho}\psi\tilde{\sigma}_{\rho} \in \mathcal{N}_R^{\Gamma_n}$. Therefore, $J(t_{\rho}\psi\tilde{\sigma}_{\rho}) \geq c_R^{\Gamma_n}$. Moreover, the argument given to prove statement $(b)$ of Proposition \ref{prop:upperbound} can be easily adapted to show that $J(t_{\rho}\psi\tilde{\sigma}_{\rho}) < 2c_\infty$. 

One can also adapt the proof of Lemma \ref{lem:splitting} to show that $J:\mathcal{N}_R^{\Gamma_n} \to \mathbb{R}$ satisfies the Palais-Smale condition at every $c<2c_0$. Then, using Ekeland's variational principle, we obtain that $c_R^{\Gamma_n}$ is attained by $J$ at some positive function $u_{R,n} \in \mathcal{N}_R^{\Gamma_n}$ which satisfies
$$c_0 < J(u_{R,n})=c_R^{\Gamma_n} < 2c_0,$$
as claimed.
\end{proof}

\begin{remark}\label{remN3}
\emph{Theorem} \ref{thm:N=3} implies that
$$\sup_{R>0}c_R^{\Gamma_n} \leq 2c_0 \quad\text{for every }2 \leq n < \infty,\quad\text{if }N = 3.$$
This stands in contrast with the statement of \emph{Lemma} \ref{lem:lowerbound}, which says that
$$\sup_{R>0}c_R^{\Gamma_n} = nc_0 \quad\text{for each }2 \leq n < \infty,\quad\text{if }N\geq4.$$
So, if $N=3$, the energy bounds for the minimizers do not allow us to distinguish them apart, as they do when $N \neq 3$.

Moreover, higher energy solutions are not easy to get because compactness is lost at many energy levels, for instance, at the levels $2jc_0$ and $c_R^{\Gamma_n}+2jc_0$ for each $j\in\mathbb{N}$, $j\geq1$.
\end{remark}

 \vspace{15pt}

\begin{flushleft}
\textbf{Mónica Clapp}\\
Instituto de Matemáticas\\
Universidad Nacional Autónoma de México\\
Circuito Exterior, Ciudad Universitaria\\
04510 Coyoacán, CDMX\\
Mexico\\
\texttt{monica.clapp@im.unam.mx} \vspace{10pt}

\textbf{Liliane A. Maia}\\
Departamento de Matemática\\
Universidade de Brasília \\
70910-900 Brasília\\
Brazil\\
\texttt{lilimaia@unb.br} \vspace{10pt}

\textbf{Benedetta Pellacci}\\
Dipartimento di Matematica e Fisica\\
Viale Lincoln 5\\
Universit\`a della Campania ``Luigi Vanvitelli''\\
81100 Caserta\\
Italy\\
\texttt{benedetta.pellacci@unicampania.it}
\end{flushleft}

\end{document}